\newcommand {\R}{\mathbb{R}}
\newcommand {\Z}{\mathbb{Z}}
\newcommand {\N}{\mathbb{N}}
\newcommand {\C}{\mathbb{C}}
\newcommand{\la}{\lambda}
\newcommand{\sumast}{\mathop{\sum\nolimits^{\mathrlap{\ast}}}}
\newcommand{\sumdag}{\mathop{\sum\nolimits^{\mathrlap{\dagger}}}}
\newcommand{\Mod}[1]{\ (\mathrm{mod}\ #1)}
\newtheorem{thm}{Theorem}[section]
\newtheorem{lemma}[thm]{Lemma}
\newtheorem{cor}{Corollary}[thm]
\newtheorem*{remark}{Remark}
\newtheoremstyle{named}{}{}{\itshape}{}{\bfseries}{.}{.5em}{\thmnote{#3}}
\theoremstyle{named}
\numberwithin{equation}{section}
\title{Shifted convolution sums for $GL(3)\times GL(2)$ averaged over weighted sets}
\author{Wing Hong Leung}
\begin{document}

\maketitle

\begin{abstract}
    Let $A(1,m)$ be the Fourier coefficients of a $SL(3,\Z)$ Hecke-Maass cusp form $F$ and $\la(m)$ be those of a $SL(2,\Z)$ Hecke holomorphic or Hecke-Maass cusp form $g$. Let $\mathcal{H}\subset\llbracket -X^{1-\varepsilon},X^{1+\varepsilon}\rrbracket$ and $\{a(h)\}_{h\in\mathcal{H}}\subset\C$ be a sequence. We show that if $\mathcal{H}\subset \ell+\llbracket 0,X^{1/2+\varepsilon}\rrbracket $ for some $\ell\geq0$, \begin{align*}
        D_{a,\mathcal{H}}(X):=\frac{1}{|\mathcal{H}|}\sum_{h\in\mathcal{H}}a(h)\sum_{m=1}^\infty A(1,m)\la(rm+h)V\left(\frac{m}{X}\right)\ll_{F,g,\varepsilon} \frac{X^{1+\varepsilon}}{|\mathcal{H}|}\|a\|_2
    \end{align*}
    for any $\varepsilon>0$, and a similar bound holds when $\mathcal{H}\supset \ell+\llbracket 0,X^{1/2+\varepsilon}\rrbracket$. This improves Sun's bound and generalizes it to an average with arbitrary weights $a(h)$. Moreover, we demonstrate how one can recover the factorizable moduli structure given by Jutila's variant of the circle method via studying a shifted sum with weighted average. This allows us to recover Munshi's bound on the shifted sum with a fixed shift without using Jutila's circle method.
\end{abstract}

\section{Introduction}

The shifted convolution sum problem concerns sums of the form \begin{align*}
    \sum_{m=1}^\infty a(m)b(m+\ell)V\left(\frac{m}{X}\right),
\end{align*}
for some smooth function $V$ compactly supported on $\R^+$. Such a sum is related to many different problems by specializing the sequences $a$ and $b$, and many cases have been extensively studied. Obtaining a non-trivial bound often yields deep applications. For example, if $a$ and $b$ are specialized to be combinations of von Mangoldt function, M\"{o}bius function and/or the divisor function, these shifted sums is related to classical problems including the twin prime conjecture, Chowla's conjecture, moments of zeta function and many others. When $a$ and $b$ are specialized to be coefficients of $\mathrm{GL}(2)$ forms, these shifted sums are related to subconvexity and Quantum Unique Ergodicity, see for example \cite{blomer2004shifted}, \cite{duke1993bounds}, \cite{harcos2003additive}, \cite{Ho}, \cite{holowinsky2012level}, \cite{KMV}, \cite{leung2022reformulation}, \cite{LS}, \cite{michel2004subconvexity}, \cite{sarnak2001estimates}, \cite{selberg1965estimation}.

We now consider a higher rank analogue, when $a$ is a $\mathrm{GL}(3)$ coefficient and $b$ is a $\mathrm{GL}(2)$ coefficient. Pitt \cite{pitt1995shifted} studied the shifted sum between the ternary divisor function and a $\mathrm{GL}(2)$ coefficient. Let $A(1,m)$ be the Fourier coefficients of a $SL(3,\Z)$ Hecke-Maass cusp form $F$ and $\la(m)$ be those of a $SL(2,\Z)$ Hecke holomorphic or Hecke-Maass cusp form $g$. For $a$ coming from a $SL(3,\Z)$ cusp form, the first breakthrough is done by Munshi in \cite{munshi2013shifted}, who obtained (after a minor correction in his Lemma 11) \begin{align*}
    D_h(X)=\sum_{m=1}^\infty A(1,m)\la(m+h)V\left(\frac{m}{X}\right)\ll_{F,g,\varepsilon} X^{1-1/26+\varepsilon}.
\end{align*}
He used Jutila's variant of the circle method in order to have factorizable moduli, which provides him a way to better balance the mass between the diagonal and offdiagonal contribution and obtained the first non-trivial bound. He also showed a similar result for the ternary divisor function case in \cite{munshi2013ashifted}. This result was later improved by Xi in \cite{xi2018shifted}, who showed that \begin{align*}
    D_h(X)\ll_{F,g,\varepsilon} X^{1-1/22+\varepsilon}.
\end{align*}
He used a similar starting point as Munshi's idea, exploiting the factorizable moduli structure given by Jutila's circle method. Additionally, he used an exponent pair argument that works on moduli that are smooth numbers to obtain the extra saving.

In the mean time, inspired by a comment made by Munshi in \cite{munshi2013shifted}, Sun studied the shifted sum with an average over the shifts in \cite{sun2018averages}. For any $A>0$ and integer $r\geq1$, she showed that \begin{align}\label{SunAverage}
    \frac{1}{H}\sum_{h=1}^\infty W\left(\frac{h}{H}\right)\sum_{m=1}^\infty A(1,m)\la(rm+h)V\left(\frac{m}{X}\right)\ll_{F,g,\varepsilon} \begin{cases} X^{-A} & \text{ if } (rX)^{1/2+\varepsilon}\leq H\leq X\\
    X^{1-\delta+\varepsilon} & \text{ if } r^{5/2}X^{1/4+7\delta/2}\leq H\leq (rX)^{1/2+\varepsilon}.\end{cases}
\end{align}

\subsection{Main results}

Let $r>0$ be a fixed positive integer. Let $\varepsilon>0$, $V\in C_c^\infty([1,2])$ be a fixed smooth function. Let $\mathcal{H}\subset\llbracket -X^{1-\varepsilon},X^{1+\varepsilon}\rrbracket$ be a set and $\{a(h)\}_{h\in\mathcal{H}}$ be a sequence of complex numbers. Here $\llbracket a,b\rrbracket$ denotes the set $\{x\in\Z: a\leq x< b\}$. Write $\|a\|_2^2=\displaystyle\sum_{h\in\mathcal{H}}|a(h)|^2$. In this paper, we study the $GL(3)\times GL(2)$ shifted convolution sum averaged over weighted sets, \begin{align}
    D_{a,\mathcal{H}}(X):=\frac{1}{|\mathcal{H}|}\sum_{h\in\mathcal{H}}a(h)\sum_{m=1}^\infty A(1,m)\la(rm+h)V\left(\frac{m}{X}\right).
\end{align}

\begin{remark}
    Fixing $r$ here is purely due to aesthetic reasons. One can easily generalize the arguments here to the case where $r>0$ is varying, but the statements and proofs will be messier.
\end{remark}

The main goal of this paper is twofold. The \textbf{first goal} is to prove the following two main theorems, which improves Sun's bound in \cite{sun2018averages} and generalize it to weighted shifts averaged over an arbitrary set.

\begin{thm}\label{mainthm}
    Let $\varepsilon>0$ and let $\mathcal{H}\subset\llbracket -X^{1-\varepsilon},X^{1+\varepsilon}\rrbracket$. We have \begin{align*}
        D_{a,\mathcal{H}}(X)\ll_{f,g,\varepsilon} \frac{ X^{3/4+\varepsilon}}{|\mathcal{H}|}\|a\|_2\left(\sum_{b\ll X^{1/2+\varepsilon}}\sup_{h\in\mathcal{H}}\left|\{h'\in\mathcal{H}: h'\equiv h\Mod{b})\right|\right)^{1/2}.
    \end{align*}
    In particular, if $\mathcal{H}\subset \ell+\llbracket 0,X^{1/2+\varepsilon}\rrbracket $ for some $\ell\geq0$, then we have \begin{align*}
        D_{a,\mathcal{H}}(X)\ll_{F,g,\varepsilon} \frac{X^{1+\varepsilon}}{|\mathcal{H}|}\|a\|_2;
    \end{align*}
    and if $\mathcal{H}\supset \ell+\llbracket 0,H\rrbracket$ for some $\ell\geq0$ and $H\gg X^{1/2+\varepsilon}$, then we have \begin{align*}
        D_{a,\mathcal{H}}(X)\ll_{F,g,\varepsilon} X^{3/4+\varepsilon}\frac{\|a\|_2}{\sqrt{|\mathcal{H}|}}.
    \end{align*}
\end{thm}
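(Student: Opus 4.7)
My plan is to transplant Munshi's $\mathrm{GL}(3)\times\mathrm{GL}(2)$ delta-method framework to the weighted-average setting, with Jutila's factorizable-moduli circle method replaced by the standard DFI delta symbol. The key observation is that the factorizable-moduli flexibility that Jutila provides artificially can be recovered for free from the $h$-average, via a Parseval-type bound on the additive sum $\sum_{h}a(h)e(-ah/q)$; the equidistribution quantity $\sup_{h}|\{h'\in\mathcal{H}:h'\equiv h\pmod{q}\}|$ will play the role of the second (factor) modulus in Jutila's setup.

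Concretely, I insert a smooth cutoff $U$ on $n\asymp rX$ and apply the DFI delta symbol with parameter $Q\asymp\sqrt{rX}$ to detect $n=rm+h$. After separating variables this expresses $D_{a,\mathcal{H}}(X)$ as a $(q,a,x)$-weighted integral of a product $\mathcal{A}_{q,a}(x)\,\mathcal{M}_{q,a}(x)\,\mathcal{N}_{q,a}(x)$, where $\mathcal{A}_{q,a}(x)=\sum_{h}a(h)e(-(a/q+x)h)$ is the pure $h$-sum, $\mathcal{M}_{q,a}$ is the $A(1,m)$-sum twisted by $e(-(a/q+x)rm)$, and $\mathcal{N}_{q,a}$ is the $\lambda(n)$-sum twisted by $e((a/q+x)n)$. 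I then apply $\mathrm{GL}(3)$ Voronoi to $\mathcal{M}_{q,a}$ and $\mathrm{GL}(2)$ Voronoi to $\mathcal{N}_{q,a}$; after stationary-phase truncation of the resulting $J$-Bessel kernels, the effective dual lengths are both $\ll X^{1/2+\varepsilon}$ when $q\leq Q$. A single Cauchy–Schwarz in the variables $(q,a,x)$ now separates the $h$-factor from the Voronoi-dualized factors:
\begin{equation*}
|D_{a,\mathcal{H}}(X)|^2\ll\frac{1}{|\mathcal{H}|^2Q^2}\,S_1\,S_2,
\end{equation*}
where $S_1$ collects $|\mathcal{A}_{q,a}(x)|^2$ and $S_2$ collects $|\mathcal{M}^{\vee}_{q,a}\mathcal{N}^{\vee}_{q,a}|^2$, each summed/integrated against the same DFI weights.

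For $S_1$, enlarging the sum over primitive $a$ to all residues and applying Parseval gives
\begin{equation*}
\sum_{a\,(\mathrm{mod}\,q)}\big|\mathcal{A}_{q,a}(x)\big|^2=q\sum_{c\,(\mathrm{mod}\,q)}\Big|\sum_{\substack{h\in\mathcal{H}\\ h\equiv c\,(\mathrm{mod}\,q)}}a(h)e(-xh)\Big|^2\leq q\,\|a\|_2^2\,\sup_{h}|\{h'\in\mathcal{H}:h'\equiv h\pmod{q}\}|
\end{equation*}
by Cauchy–Schwarz within each residue class; a dyadic summation over $q\leq Q$ then produces the desired factor $\|a\|_2^2\sum_{b\ll X^{1/2+\varepsilon}}\sup_h|\{h'\equiv h\pmod{b}\}|$. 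For $S_2$, trivially estimating the dual lengths together with Weil on the arising Kloosterman sums and Rankin–Selberg on the $A(1,\cdot)$-coefficients should yield $S_2\ll X^{3/2+\varepsilon}Q^2$; this is the standard $X^{1/4}$-saving hallmark of the delta-plus-double-Voronoi method and is responsible for the $X^{3/4+\varepsilon}$ factor in the theorem. The main obstacle I anticipate is the careful bookkeeping of the Voronoi-dualized oscillatory factors and the truncation of the dual ranges via stationary phase, together with handling the small-$q$ boundary case where the $\mathrm{GL}(2)$ dual sum becomes too short for Voronoi to be useful and one must instead bound $\mathcal{N}_{q,a}$ directly; a dyadic count shows these $q$ contribute no loss.
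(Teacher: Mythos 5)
Your front end (delta symbol plus double Voronoi, with the $h$-average entering only through the residue-class count $\sup_h|\{h'\in\mathcal{H}:h'\equiv h\Mod{q}\}|$) matches the paper's, and your $S_1$ treatment via Parseval is a clean substitute for part of what the paper does. But your Cauchy--Schwarz is structured differently from the paper's: you separate the pure $h$-sum $\mathcal{A}_{q,a}$ from the product $\mathcal{M}^{\vee}\mathcal{N}^{\vee}$, whereas the paper keeps the $h$-sum \emph{inside} the square, takes the $\mathrm{GL}(3)$ dual variable outside (with a lengthening trick), applies Poisson in that variable so that only the zero frequency survives, and then evaluates the resulting character sum (Lemma \ref{lem.Charsum}) to produce the congruence $h_1\equiv h_2\Mod{b''}$. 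The difference matters because of where the final $X^{1/4}$ is earned.

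The genuine gap is your bound $S_2\ll X^{3/2+\varepsilon}Q^2$, which you claim follows from "trivially estimating the dual lengths together with Weil." It does not. For $q\asymp Q\asymp X^{1/2}$ the $\mathrm{GL}(3)$ dual length is $\asymp q^3/X\asymp X^{1/2}$, and pointwise Weil gives $|\mathcal{M}^{\vee}_{q,a}|\ll \frac{X^{2/3}}{q}\sqrt{q}\,(q^3/X)^{2/3}\asymp q^{3/2}$, while $|\mathcal{N}^{\vee}_{q,a}|\ll X^{1/2+\varepsilon}$. Hence
\begin{align*}
S_2\;\ll\;\sum_{q\le Q}\frac{1}{q}\sum_{a\Mod{q}}q^{3}\cdot X^{1+\varepsilon}\;\ll\;Q^{4}X^{1+\varepsilon}\;=\;X^{2+\varepsilon}Q^{2},
\end{align*}
which is a factor $X^{1/2}$ weaker than what you assert and yields only $D_{a,\mathcal{H}}\ll X^{1+\varepsilon}|\mathcal{H}|^{-1}\|a\|_2\Sigma^{1/2}$ --- i.e.\ you lose exactly the $X^{1/4}$ the theorem is about. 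To recover $X^{3/2+\varepsilon}Q^2$ you must exploit the average over $a\Mod{q}$: open $|\mathcal{M}^{\vee}_{q,a}|^2$, sum over $a$ first, and use orthogonality of the Kloosterman sums $\sum_{a}^{*}S(\overline{a},n_1;q)\overline{S(\overline{a},n_2;q)}$ to reduce to the diagonal $n_1\equiv n_2\Mod q$ (essentially $n_1=n_2$ since the dual length is $\le q$), giving $\sum_a|\mathcal{M}^{\vee}_{q,a}|^2\ll X^{1+\varepsilon}q$ by Rankin--Selberg. This second-moment step is not optional decoration; it is the analogue of the paper's Poisson-plus-character-sum evaluation and is where the saving actually comes from. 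A second, more minor issue: with your choice $Q\asymp\sqrt{rX}$ the archimedean factor $Y/Q^2\asymp 1$ does not force the small moduli to die, so for $q\ll X^{1/2-\varepsilon}$ you face $|\mathcal{N}^{\vee}_{q,a}|\ll X/q$ and essentially no saving; the paper takes $C=X^{1/2+2\varepsilon}$ precisely so that integration by parts annihilates all moduli $b\ll Y^{1/2-\varepsilon}$, and you should take $Q=X^{1/2+\varepsilon}$ for the same reason rather than appeal to an unspecified "dyadic count."
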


Compared to the current best bound $X^{1-1/22+\varepsilon}$ for a fixed shift (meaning $h$ is a fixed integer instead of being averaged over a set $\mathcal{H}$) obtained by Xi in \cite{xi2018shifted}, this shows that having an average of shifts is beneficial when $|\mathcal{H}|\gg X^{1/11}$. Taking the special case $\mathcal{H}=\llbracket 1,H\rrbracket$ and $a(h)=W(h/H)$ for some function $W\in C_c^\infty(\R^+)$, Theorem \ref{mainthm} then implies that the average shifted sum in \eqref{SunAverage} is bounded by $X^{1+\varepsilon}/\sqrt{H}$ for $H\ll X^{1/2+\varepsilon}$, thus improving the bound \eqref{SunAverage} obtained by Sun in \cite{sun2018averages}\footnote{It is easy to obtain the arbitrary saving part when $H\gg X^{1/2+\varepsilon}$ and it is also covered as a special case of Theorem \ref{mainthm2} shown below.}.

If $\mathcal{H}$ and $\{a(h)\}$ are `factorizable' in the following manner: Let $D,Q_1,Q_2>0$ such that $DQ_1Q_2\ll X^{1-\varepsilon}$ and let $\mathcal{Q}_1\subset\llbracket 1,Q_1\rrbracket$. Let $\{a'(q_1)\}_{q_1\in\mathcal{Q}_1}$ be a sequence of complex numbers. Let $V_1,V_2\in C_c^\infty(\R^+)$ be fixed functions. Let $0\leq \ell\ll X^{1+\varepsilon}$ and suppose $a(h)$ is of the form \begin{align}
    a_\pm(h)=\mathop{\sum\sum\sum}_{\substack{q_1\in\mathcal{Q}_1\\\pm dq_1q_2=h-\ell}}V_1\left(\frac{d}{D}\right)V_2\left(\frac{q_2}{Q_2}\right)a'(q_1)\tag{$\ast$}
\end{align}
and \begin{align*}
    \mathcal{H}\supset \{h\in\Z: a_\pm(h)\neq0\},
\end{align*}
then we have the following theorem.

\begin{thm}\label{mainthm2}
    Let $\varepsilon>0$. Suppose $\mathcal{H}$ and $\{a_\pm(h)\}$ satisfy $(*)$. If $D+Q_1\gg X^{1/2+\varepsilon}$, we have \begin{align*}
        D_{a_\pm,\mathcal{H}}(X)\ll_{F,g,\varepsilon,A}X^{-A}
    \end{align*}
    for any $A>0$. On the other hand, if $D+Q_1\ll X^{1/2+\varepsilon}$ and $\mathcal{Q}_1\subset \{1 \text{ or Primes in }[Q_1,2Q_1]\}$ with $Q_1Q_2\gg X^{1/2+\varepsilon}$, we have \begin{align*}
        D_{a_\pm,\mathcal{H}}(X)\ll_{F,g,\varepsilon}\frac{X^{1+\varepsilon}}{\sqrt{|\mathcal{H}|}}\left(1+\frac{\sqrt{X}Q_1}{DQ_2}\right)^{1/2}\|a\|_\infty,
    \end{align*}
    where $\displaystyle\|a\|_\infty=\sup_{h\in\mathcal{H}}\{|a_\pm(h)|\}$.
\end{thm}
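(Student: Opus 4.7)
The plan is to mimic Munshi's circle-method approach from \cite{munshi2013shifted}, but with the factorizable modulus structure arising from the weight $a_\pm$ itself rather than from Jutila's circle method. Substituting the definition of $a_\pm$, one writes
\[
D_{a_\pm,\mathcal{H}}(X)=\frac{1}{|\mathcal{H}|}\sum_{q_1\in\mathcal{Q}_1}a'(q_1)\sum_{d,q_2}V_1\!\left(\tfrac{d}{D}\right)V_2\!\left(\tfrac{q_2}{Q_2}\right)\sum_m A(1,m)\lambda(rm+\ell\pm dq_1q_2)V\!\left(\tfrac{m}{X}\right),
\]
and one views $c:=dq_1q_2$ as a factorizable pseudo-modulus of exactly the shape Jutila's method would produce with outer parameters $D,Q_1,Q_2$.

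Next I apply the DFI $\delta$-method to detect $n=rm+\ell\pm dq_1q_2$, introducing an auxiliary variable $n\asymp X$ and a modulus $q\le Q\asymp X^{1/2}$ together with the usual additive character $a/q$ and integration variable $x$. GL(3)-Voronoi applied to the $A(1,m)$-sum and GL(2)-Voronoi applied to the $\lambda(n)$-sum dualize them to lengths $\asymp q^3/X^2$ and $\asymp q^2/X$ respectively, producing a hyper-Kloosterman sum and a Kloosterman sum modulo $q$, oscillatory integrals handled by stationary phase, and a leftover $h$-sum
\[
\sum_{d,q_1,q_2}V_1\!\left(\tfrac{d}{D}\right)V_2\!\left(\tfrac{q_2}{Q_2}\right)a'(q_1)\,e\!\left(\mp dq_1q_2\bigl(\tfrac{a}{q}+\tfrac{x}{qQ}\bigr)\right).
\]

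To this last sum I apply Poisson summation in $d$ and $q_2$. The rapid decay of $\widehat V_1,\widehat V_2$ localizes the dual frequencies $d^*,q_2^*$ to $O(1/D)$- and $O(1/Q_2)$-neighborhoods of $\mp q_1q_2(a/q+x/(qQ))$ and $\mp dq_1(a/q+x/(qQ))$ respectively. These twin localizations force the running modulus $q$ to split compatibly with $(d,q_1,q_2)$; this is precisely the step where the Jutila-type factorizable modulus is recovered from the weighted average. In the first regime $D+Q_1\gg X^{1/2+\varepsilon}$, these constraints combined with $q\le X^{1/2}$ and $(a,q)=1$ admit no nonzero solutions, and repeated integration by parts in $x$ at the surviving zero-frequency terms yields the $X^{-A}$ bound.

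In the second regime the Poisson-dual constraints produce a genuine factorizable main modulus $q=\tilde d\tilde q_1\tilde q_2$, and I close the argument by Cauchy--Schwarz in the prime variable $q_1\in\mathcal{Q}_1$, opening the square and using orthogonality modulo $q_1$ together with standard Weil-type bounds on the surviving Kloosterman and hyper-Kloosterman sums (the primality of $q_1$ being essential for the CRT split of the character sums). Bookkeeping the dualized lengths $q^3/X^2$ and $q^2/X$ against the $V_1,V_2$-weights then yields the claimed bound $\tfrac{X^{1+\varepsilon}}{\sqrt{|\mathcal{H}|}}\bigl(1+\sqrt{X}\,Q_1/(DQ_2)\bigr)^{1/2}\|a\|_\infty$. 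The main obstacle is precisely the stationary-phase and character-sum bookkeeping at steps 3 and 4: verifying that the Poisson-dual constraints on $d^*,q_2^*$ genuinely reproduce a Jutila-shaped factorizable modulus, and that the primality of $q_1$ provides exactly the off-diagonal saving needed to match Munshi's fixed-shift bound after Cauchy--Schwarz.
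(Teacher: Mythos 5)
Your outline has the right coarse skeleton (delta method, double Voronoi, Poisson in $d$ and $q_2$, Cauchy--Schwarz, character sums), but it diverges from a workable argument at exactly the points you flag as "the main obstacle," and those divergences are genuine gaps. First, you are missing the paper's key device of multiplying by $1=\bigl(n^2/(dq_1q_2)^2\bigr)^{it}$ with $t=Y^{\varepsilon}$ \emph{before} applying the delta symbol (Lemma 5.1). Without this, the zero frequencies $d^\ast=q_2^\ast=0$ of your Poisson summations survive: $\widehat{V_1},\widehat{V_2}$ do not vanish at the origin, and on the many moduli for which $\|q_1q_2a/q\|$ is small the zero-frequency term is non-oscillatory in $x$, so "repeated integration by parts in $x$" does not save you — these terms reconstitute a main-term-sized contribution, and the paper states explicitly that no satisfactory bound is possible with them present. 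The inserted $t$-oscillation moves the stationary point of the $d$- and $q_2$-integrals away from $0$, which is what actually forces $0<d^\ast\sim cY^{\varepsilon}/D$, $0<q_2^\ast\sim cY^{\varepsilon}/Q_2$ and hence both the $X^{-A}$ bound in the first regime (the dual ranges become empty) and the clean Kloosterman structure in the second. Second, your central structural claim — that the Poisson-dual constraints "force the running modulus $q$ to split" into a Jutila-shaped factorization — is unsubstantiated and is not how the argument goes; the localization $\|q_1q_2a/q\|\ll Y^{\varepsilon}/D$ constrains a residue, not a factorization of $q$ (the impossibility of forcing DFI moduli to factor is precisely why Munshi resorted to Jutila's method in the first place). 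In the paper the modulus $c$ stays intact: Poisson in $d,q_2$ against $e(\alpha(\pm dq_1q_2)/c)$ produces a complete sum that evaluates to $S(n,\mp dq_2\overline{q_1};c)$, with primality of $q_1$ used only to guarantee $(c,q_1)=1$; the factorizable data lands in the \emph{argument} of a Kloosterman sum, not in the modulus.

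Two further points. The paper first splits off the range $c\le Y^{1/2-2\varepsilon}$ and kills it by $\mathrm{GL}(2)$ Voronoi (the dual $n$-length is $\ll X^{-\varepsilon}$ there); your proposal has no analogue of this step, and the subsequent inertness of the weight needed for the Voronoi/Poisson analysis relies on $c>Y^{1/2-2\varepsilon}$. And your Cauchy--Schwarz "in the prime variable $q_1$" is not the right move: the paper Cauchy--Schwarzes to pull out the dual $\mathrm{GL}(3)$ variable $m$, applies the lengthening trick with $L=m_0^{2/3}X^{5\varepsilon}$ so that Poisson in $m$ retains only the zero frequency, and then the final factor $\bigl(1+\sqrt{X}Q_1/(DQ_2)\bigr)^{1/2}$ arises from counting solutions of $d_1q_1'q_2\equiv d_2q_1q_2'\pmod{c''}$ in the off-diagonal of the resulting character sum (Lemma B.1). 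Neither the lengthening trick nor that congruence count appears in your plan, so the claimed final bound is not actually derived.
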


The special case $(*)$ is of particular interest to us, and it is related to our \textbf{second goal} of this paper: To recover the factorizable moduli structure of Jutila's circle method, which we now explain and exemplify with the fixed shift problem.

Let $0\leq \ell\ll X^{1+\varepsilon}$, write $Y=X+\ell$ and consider \begin{align*}
    D_\ell(X)=\sum_{m=1}^\infty A(1,m)\la(m+\ell)V\left(\frac{m}{X}\right)
\end{align*}
as before. Let $\varphi \in C_c^\infty([1/2,5/2])$ be such that $\varphi (x)=1$ for $1\leq x\leq 2$. Applying a divisor switching or hyperbola trick similar to the initial steps on how the reformulation of the Delta method is proved (see Lemma \ref{DeltaMethod} and appendix \ref{sect.ProofDeltaMethod}), we obtain the following simple Lemma.

\begin{lemma}\label{mainlemma}
    Let $q$ be a positive integer and $D>0$ be a parameter. Let $H$ be any function such that $H(0)=1$. Then we have \begin{align*}
        \delta(n=0)=&\delta(n\equiv 0\Mod{q})H\left(\frac{n}{Dq}\right)-\sum_{0\neq d\in\Z}\delta(n=dq)H\left(\frac{d}{D}\right).
    \end{align*}
    In particular, for any set of positive integers $\mathcal{Q}$ with any sequence $\{b(q)\}_{q\in\mathcal{Q}}$, we have \begin{align*}
        D_\ell(X)= M.T. - A.S.^+-A.S.^-,
    \end{align*}
    where \begin{align*}
        M.T.:=\frac{1}{\mathfrak{Q}}\sum_{q\in\mathcal{Q}}b(q)\sum_{m=1}^\infty A(1,m)V\left(\frac{m}{X}\right)\sum_{n=1}^\infty\la(n)\varphi \left(\frac{n}{Y}\right)\delta(m+\ell\equiv n\Mod{q})H\left(\frac{m+\ell-n}{Dq}\right)
    \end{align*}
    and \begin{align*}
        A.S.^\pm:=\frac{1}{\mathfrak{Q}}\sum_{q\in\mathcal{Q}}b(q)\sum_{d=1}^\infty\sum_{m=1}^\infty A(1,m)V\left(\frac{m}{X}\right)\la(m+\ell\pm dq)\varphi \left(\frac{m+\ell\pm dq}{Y}\right)H\left(\pm\frac{d}{D}\right),
    \end{align*}
    with $\mathfrak{Q}=\sum_{q\in\mathcal{Q}}b(q)$.
\end{lemma}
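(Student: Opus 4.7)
The plan is to verify the pointwise identity by a short case analysis and then substitute it into the definition of $D_\ell(X)$; the lemma is essentially an algebraic reorganization once the identity is in hand.

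First I would verify the identity $\delta(n=0)=\delta(n\equiv 0\Mod{q})F(n/(Dq))-\sum_{0\neq d\in\Z}\delta(n=dq)F(d/D)$ by splitting into three cases. If $n=0$, the first term on the right gives $F(0)=1$ and the sum over $d\neq 0$ is empty (since $dq=0$ forces $d=0$), matching the LHS. If $n\neq 0$ and $q\nmid n$, every term on the right vanishes, matching LHS $=0$. If $n\neq 0$ and $q\mid n$, write $n=d_0q$ with $d_0\neq 0$; the first term contributes $F(d_0/D)$, while the sum over $d\neq 0$ has exactly one surviving summand (namely $d=d_0$) contributing $F(d_0/D)$, and the two cancel. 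This establishes the pointwise identity for any $F$ with $F(0)=1$ and any positive integer $q$.

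Next I would turn the identity into the stated decomposition. The key observation is that, for $m$ in the support of $V(m/X)$, the choice $Y=X+\ell$ together with $\varphi\in C_c^\infty([1/2,5/2])$ with $\varphi\equiv 1$ on $[1,2]$ guarantees $\varphi((m+\ell)/Y)=1$, so that we may write $\la(m+\ell)=\sum_{n\geq 1}\la(n)\varphi(n/Y)\delta(n=m+\ell)$. Applying the pointwise identity in the form $\delta(n=m+\ell)=\delta(m+\ell-n=0)$ (i.e.\ with $n_0:=m+\ell-n$) and using $\delta(m+\ell-n=dq)=\delta(n=m+\ell-dq)$, I obtain
\begin{align*}
    \la(m+\ell)&=\sum_{n\geq 1}\la(n)\varphi(n/Y)\,\delta(m+\ell\equiv n\Mod{q})F\left(\frac{m+\ell-n}{Dq}\right)\\
    &\quad-\sum_{0\neq d\in\Z}F(d/D)\,\la(m+\ell-dq)\varphi\left(\frac{m+\ell-dq}{Y}\right),
\end{align*}
where the $n$-summation was collapsed in the second line via the delta. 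Splitting $d\neq 0$ into $d\geq 1$ and $d\leq -1$ (relabeling $d\mapsto -d$ in the latter) produces the two pieces matching $A.S.^+$ and $A.S.^-$ up to the sign conventions in the statement.

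Finally, I would multiply by $A(1,m)V(m/X)$ and sum over $m\geq 1$, then average over $q\in\mathcal Q$ with weights $b(q)/|\mathcal Q|$. Since the pointwise identity holds for every $q$, the LHS becomes $\bigl(|\mathcal Q|^{-1}\sum_{q\in\mathcal Q} b(q)\bigr)D_\ell(X)$, so under the implicit normalization $|\mathcal Q|^{-1}\sum_{q}b(q)=1$ this is exactly $D_\ell(X)$, and the RHS is by construction $M.T.-A.S.^+-A.S.^-$. There is no serious obstacle here; the only care needed is in checking the compatibility of the supports of $V$ and $\varphi$ with the choice $Y=X+\ell$ so that the insertion $\varphi((m+\ell)/Y)=1$ is free, and in tracking the signs when splitting $\sum_{d\neq 0}$ into the $\pm$ contributions.
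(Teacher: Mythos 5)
Your proof is correct and follows essentially the same route as the paper: the paper verifies the pointwise identity by completing the $d$-sum to all of $\Z$ and subtracting the $d=0$ term (which equals $\delta(n=0)$ since $F(0)=1$), which is just an algebraic repackaging of your three-case check, and it leaves the substitution into $D_\ell(X)$ entirely implicit, exactly as you carry it out. Your two side remarks are both legitimate observations about the statement rather than gaps in your argument: the decomposition does require the normalization $|\mathcal{Q}|^{-1}\sum_{q\in\mathcal{Q}}b(q)=1$, and the $d\mapsto-d$ relabeling genuinely pairs $\la(m+\ell+dq)$ with $F(-d/D)$ rather than $F(+d/D)$ as written, a discrepancy that is immaterial only because $F=U$ is taken even in the paper's application.
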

\begin{proof}
    Using $H(0)=1$, we have
    \begin{align*}
        \sum_{0\neq d\in\Z}\delta(n=dq)H\left(\frac{d}{D}\right)=&\sum_{0\neq d\in\Z}\delta(n=dq)H\left(\frac{n}{Dq}\right)\\
        =&\sum_{d\in\Z}\delta(n=dq)H\left(\frac{n}{Dq}\right)-\delta(n=0)\\
        =&\delta(n\equiv 0\Mod{q})H\left(\frac{n}{Dq}\right)-\delta(n=0).
    \end{align*}
    To get the second statement, note that $\varphi((m+\ell)/Y)=1$ when $X\leq m\leq 2X$ and apply the above expression of the delta symbol to \begin{align*}
        D_\ell(X)=&\sum_{m=1}^\infty A(1,m)\la(m+\ell)V\left(\frac{m}{X}\right)\varphi\left(\frac{m+\ell}{Y}\right)\\
        =&\sum_{m=1}^\infty A(1,m)V\left(\frac{m}{X}\right)\sum_{n=1}^\infty \la(n)\varphi\left(\frac{n}{Y}\right)\delta(m+\ell-n=0).
    \end{align*}
    Averaging over $q\in \mathcal{Q}$ with weights $b(q)$ yields the second statement.
\end{proof}

With \begin{align*}
    \delta(m+\ell\equiv n\Mod{q})=\frac{1}{q}\sum_{\alpha\Mod{q}}e\left(\frac{\alpha(m+\ell-n)}{q}\right)=\frac{1}{q}\sum_{q_0q'=q}\sumast_{\alpha\Mod{q'}}e\left(\frac{\alpha(m+\ell-n)}{q'}\right),
\end{align*}
$M.T.$ is almost the main term when one applies Jutila's circle method, with the only difference being the presence of the $q_0$-sum. In application, such a difference is easy to handle by choosing $\mathcal{Q}$ carefully. On the other hand, $A.S.^\pm$ is precisely a shifted sum with the shifts summing over the set $\mathcal{H}=\{\ell\pm dq:H(d)\neq0, q\in\mathcal{Q}\}$ with weights $a(h)=\sumdag_{dq=h}H\left(\pm\frac{d}{D}\right)$, with the sum going over $\{d:H(d)\neq0\}$ and $q\in\mathcal{Q}$. Applying a dyadic subdivision on the $d$-sum followed by Theorem \ref{mainthm2}, we recover Munshi's result on the $GL(3)\times GL(2)$ shifted convolution sum for a fixed shift by analysing the average version of the shifted sum, instead of invoking Jutila's circle method.

\begin{thm}(\cite[Thm 1]{munshi2013shifted})
    For $0<h\ll X^{1+\varepsilon}$, \begin{align*}
        \mathcal{D}_h(X)\ll X^{1-1/26+\varepsilon}.
    \end{align*}
\end{thm}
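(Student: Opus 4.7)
The plan is to apply Lemma \ref{mainlemma} (with $\ell=h$, so that $D_\ell(X)=\mathcal{D}_h(X)$), taking $\mathcal{Q}$ to be the set of primes in a dyadic window $[Q,2Q]$, weights $b(q)=1$, an auxiliary scale parameter $D\geq 1$, and a fixed bump $F\in C_c^\infty$ with $F(0)=1$. This decomposes $\mathcal{D}_h(X)=M.T.-A.S.^+-A.S.^-$. I would fix $Q\gg X^{1/2+\varepsilon}$ so that the off-diagonal pieces $A.S.^\pm$ become negligible, and then bound $M.T.$ by following Munshi's post-Voronoi analysis — with the crucial difference that the factorizable modulus structure is produced directly by the lemma rather than by Jutila's circle method.

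\emph{Step 1: Controlling $A.S.^\pm$.} Dyadically decompose the $d$-sum into pieces $d\asymp D'$ with $1\leq D'\leq D$. Each dyadic piece matches the template $(*)$ with $q_1=q$, $q_2=1$ (so $Q_2=1$), $\mathcal{Q}_1=\mathcal{Q}$, $a'(q)=1$, and scale $D'$. Because $Q_1=Q\gg X^{1/2+\varepsilon}$ uniformly in $D'$, the first case of Theorem \ref{mainthm2} gives $D_{a_\pm,\mathcal{H}}(X)\ll_A X^{-A}$. Rescaling by the factor $|\mathcal{H}|/|\mathcal{Q}|\ll D'$ and summing over the $O(\log X)$ dyadic scales preserves $A.S.^\pm\ll_A X^{-A}$.

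\emph{Step 2: Reducing $M.T.$ via Voronoi.} Thus $\mathcal{D}_h(X)=M.T.+O(X^{-A})$. Expand the congruence additively,
\[
\delta(m+h\equiv n\Mod{q}) = \frac{1}{q}\sum_{a\Mod{q}} e\left(\frac{a(m+h-n)}{q}\right),
\]
and decouple the $m$ and $n$ sums via Fourier expansion of $F((m+h-n)/(Dq))$. Apply GL(3) Voronoi to the $m$-sum and GL(2) Voronoi to the $n$-sum with modulus $q$; the dual lengths are $\ll q^{3}X^{\varepsilon}/X$ and $\ll q^{2}X^{\varepsilon}/Y$ respectively, so for $q\sim\sqrt{X}$ the GL(2) dual essentially collapses to a bounded number of terms.

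\emph{Step 3: Post-Voronoi estimate and main obstacle.} After Voronoi one is left with a sum over dual variables $m',n'$, over primes $q\sim Q$, and over hyper-Kloosterman-type sums produced by the $a\pmod q$ average. Apply Cauchy--Schwarz in the dual GL(3) variable $m'$, open the square, and execute Poisson summation in $q$ (restricted to primes) to unlock cancellation from the diagonal of the dual variables. The main obstacle is precisely this final estimate: it requires controlling the oscillatory integrals produced by Voronoi (via stationary phase) and extracting cancellation from the arithmetic $q$-sums — essentially the content of Munshi's original argument, but cleaner because the factorizable structure was built in from the start. Jointly optimising the two free parameters $Q$ and $D$ to balance the diagonal against the transformed off-diagonal yields the saving $X^{1/26-\varepsilon}$ over the trivial bound $X^{1+\varepsilon}$, thereby recovering $\mathcal{D}_h(X)\ll X^{1-1/26+\varepsilon}$.
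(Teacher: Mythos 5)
There is a genuine gap, and it sits at the very start of your construction. You take $\mathcal{Q}$ to be primes in a single window $[Q,2Q]$ with $Q\gg X^{1/2+\varepsilon}$, match the dyadic pieces of $A.S.^\pm$ to the template $(*)$ with $q_2=1$, $Q_2=1$, and invoke the first case of Theorem \ref{mainthm2} on the grounds that $Q_1=Q\gg X^{1/2+\varepsilon}$. But the mechanism behind that case is Poisson summation in the two \emph{smooth} sums (over $d$ and over $q_2$) after the $(n/(dq_1q_2))^{2it}$ twist kills the zero frequencies: Lemma \ref{mainlemma2} gives $\mathcal{S}_1^\pm=0$ only when $D+Q_2\gg X^{1/2+\varepsilon}$, and the proof of Theorem \ref{mainthm2} uses exactly that condition. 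The variable $q_1$ ranges over an arbitrary weighted set and is never Poisson-summed, so its size cannot produce negligibility; the ``$D+Q_1$'' in the theorem statement is inconsistent with its own proof. With $Q_2=1$ and dyadic blocks $D'=O(1)$, your $A.S.^\pm$ is not $O(X^{-A})$.

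Even if Step 1 were repaired, Step 3 cannot deliver $X^{1/26}$. Your main term has a single prime modulus $q\sim Q\gg X^{1/2+\varepsilon}$, which is precisely the standard DFI delta-method configuration that Munshi's paper starts from and shows to be insufficient: after the two Voronoi summations and Cauchy--Schwarz there is no way to balance the diagonal against the off-diagonal unless the modulus factors as $q=q_1q_2$ with one factor kept inside the square and the other pulled out. That factorization is the entire point of the exercise, and it is what the paper builds in: Section \ref{sect.RecoveringMunshi} takes $\mathcal{Q}=\{\text{primes in }[Q_1,2Q_1]\}\times\llbracket Q_2,2Q_2\rrbracket$ with $Q_1^2\ll X^{1/2+\varepsilon}\ll Q_1Q_2$ and $D=X^{1-\varepsilon}/(Q_1Q_2)$, bounds $A.S.^\pm$ by the \emph{second} case of Theorem \ref{mainthm2} (obtaining $\ll X^{3/2}/(Q_1Q_2)$, which is small but not negligible, and which forces $Q_1Q_2\gg X^{1/2}$), and only then feeds the genuinely factorizable $M.T.$ into Munshi's post-Voronoi analysis. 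Your ``two free parameters $Q$ and $D$'' give no such leverage, since $D$ disappears from the main term apart from the harmless weight $F((m+h-n)/(Dq))$.
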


Looking at the shifted convolution sum problem $\mathcal{D}_h(X)$ without an average over the shift $h$, we have demonstrated how one can obtain an arbitrary moduli structure allowed in Jutila's circle method by Lemma \ref{mainlemma}: One can choose the set of moduli $\mathcal{Q}$ freely, $M.T.$ resembles the main term in Jutila's circle method, and one studies the average shifted sum $A.S.^\pm$ instead of the implicit error term given in Jutila's circle method. This framework works in general, as depicted by the simplicity of Lemma \ref{mainlemma}. Surprisingly in the case of $\mathrm{GL}(3)\times\mathrm{GL}(2)$ shifted sum, the bound coming from the average shifted sum we obtain in Theorem \ref{mainthm2} matches with the error term treatment in Jutila's circle method, and hence giving the exact same bound Munshi obtained in \cite{munshi2013shifted} (after fixing the error in his Lemma 11).\footnote{The moduli structure we have here is essentially of the form $[Q_1,2Q_1]\cdot \{$Primes in $[Q_2,2Q_2]\}$ for some $Q_1,Q_2>0$. This is technically different from Munshi's choice of the form $\{$Primes in $[Q_1,2Q_1]\}\cdot \{$Primes in $[Q_2,2Q_2]\}$. However, the difference is technical but not essential.} (See Section \ref{sect.RecoveringMunshi} for details.) However, the method to bound $A.S.^\pm$ is quite different from how the error term in Jutila's circle method is treated, which has the following two implications. \begin{itemize}
    \item One can improve the resulting bound by improving the bound on the average shifted sums.
    \item It may lead to different estimates if one applies this framework in other problems when Jutila's circle method is helpful.
\end{itemize}

On the other hand, recovering a special case of Jutila's circle method using our Delta method (see Section \ref{sect.Delta}) is interesting on its own. The author showed in \cite[Ch 3]{leung2022reformulation} that appropriate choices of parameters in our Delta method recovers most Delta methods\footnote{They include the trivial Delta method \cite{aggarwal2020burgess}, Kloosterman's refinement of the circle method \cite{kloostermanrefinement}, DFI Delta method \cite{DFIdelta} and some special cases of the $\mathrm{GL}(2)$ Delta method (using the Petersson trace formula as a Delta symbol).} together with a simple case of Jutila's circle method\footnote{The simple case refers to choosing the set of moduli as $\mathcal{Q}_1\cdot\llbracket 1,Q_2\rrbracket$ with $\mathcal{Q}_1\subset \llbracket Q_1,2Q_1\rrbracket$ with $Q_1Q_2^2$ larger than the size of the equation $X$. This simple case is also covered by the DFI Delta method with an appropriate conductor lowering trick.}. It is then natural to ask if we can recover other special cases of Jutila's circle method. Since our Delta method is used in proving Theorem \ref{mainthm} and \ref{mainthm2}, the above discussions provide a partial affirmative answer.

Unfortunately, we have not been able to recover Xi's $X^{1-1/22+\varepsilon}$ bound for the fixed shift problem in \cite{xi2018shifted}. This is due to the fact that his method seems to crucially rely on the choice of moduli being square-free smooth numbers. In such a case, Theorem \ref{mainthm} and the methods used in proving Theorem \ref{mainthm2} are not sufficient to yield a good enough bound for these kind of shifts.

\subsection{Two tricks}

To prove Theorem \ref{mainthm} and \ref{mainthm2}, we used the Delta method (Lemma \ref{DeltaCor}) to separate the oscillations. In the process, we used two simple tricks to obtain the main results. The first one is crucial, and the second one is used to shorten the proof. Moreover, these tricks may be of independent interest to the readers.

\begin{enumerate}
    \item At some Cauchy Schwarz inequality step, we apply a lengthening trick to the outer sum, i.e. \begin{align*}
        \sum_{m\leq M}\left|f(m)\right|^2\ll \sum_{m\leq ML}\left|f(m)\right|^2
    \end{align*}
    for any $L\geq1$. This lengthening trick can be seen as a simpler analogue of Xiannan Li's trick on his recent paper \cite{li2022moments}. By adding more summation terms outside an absolute value square, we boost the diagonal contribution while lowering the offdiagonal contribution. See Section \ref{sect.CS1} and \ref{sect.CS2} for details.
    \item To prove Theorem \ref{mainthm2}, we add a small oscillation right before we replace the Delta symbol by our Delta method, i.e. if $b>0$, we have \begin{align*}
        \delta(a=b)=\delta(a=b)(a^2/b^2)^{it}
    \end{align*}
    for any $t\in\R$. This allows us to remove certain zero frequency terms coming from Poisson summations and shortens our proof. See Lemma \ref{mainlemma2} and its proof in Section \ref{sect.Proofmainlemma2} for details.
\end{enumerate}

\noindent \textbf{Notation.} Throughout the paper, $\varepsilon>0$ is a very small number that may vary depending on context. We denote $e(x)=e^{2\pi ix}$. We write $\llbracket a,b\rrbracket$ as the set $\{x\in\Z: a\leq x<b\}$. We denote $A\cdot B=\{ab:a\in A, b\in B\}$ for any sets $A,B\subset \Z$. We say a function $f$ is $Z$-inert if it satisfies the bound $x^{-j}\frac{d^j}{dx^j}f(x)\ll_j Z^j$ for any $j\geq0$.

\section{Preliminaries}

\subsection{\texorpdfstring{$SL(3,\Z)$}{SL(3,Z)} Maass forms}

Let $F$ be a Maass form of type $(\nu_1,\nu_2)$ for $\mathrm{SL}_3(\Z)$, which is an eigenfunction for all the Hecke operators. Let the Fourier coefficients be $A(n_1,n_2)$, normalized so that $A(1,1)=1$. The Langlands parameter $(\alpha_1,\alpha_2, \alpha_3)$ associated with $F$ are $\alpha_1=-\nu_1-2\nu_2+1,\ \alpha_2= -\nu_1+\nu_2,\ \alpha_3= 2\nu_1+\nu_2-1$. We refer the reader to Goldfeld's book \cite{goldfeldbook} for more details.

The Fourier coefficients satisfy the following bound.

\begin{lemma}\label{lem.GL3RP}
    We have
    \begin{align*}
        \sum_{m^2n\ll M}|A(m,n)|^2, \sum_{m^2n\ll M}|A(n,m)|^2\ll M^{1+\varepsilon}.
    \end{align*}
\end{lemma}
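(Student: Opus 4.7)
The plan is to deduce both bounds from the standard Rankin–Selberg theory for $\pi$ on $\mathrm{GL}(3)$, together with the symmetry $A(n,m) = \overline{A(m,n)}_{\tilde\pi}$ that identifies the swapped coefficients with the Fourier coefficients of the contragredient $\tilde\pi$.

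First, I would recall the Rankin–Selberg identity
\begin{align*}
    L(s,\pi\times\tilde\pi) \;=\; \zeta(3s)\sum_{m,n\geq 1}\frac{|A(m,n)|^2}{(m^2 n)^s},
\end{align*}
valid for $\re(s)>1$ (this is the computation in Goldfeld \cite{goldfeldbook}, Ch.~7). The Rankin–Selberg $L$-function on $\mathrm{GL}(3)\times\mathrm{GL}(3)$ is known to admit a meromorphic continuation with only a simple pole at $s=1$, satisfies a functional equation, and is of polynomial growth in vertical strips. Combined with $\zeta(3s)^{-1}$ being holomorphic and of polynomial growth in $\re(s)\geq 1/3+\delta$, the Dirichlet series $\sum_{m,n}|A(m,n)|^2/(m^2 n)^s$ has the same analytic behaviour: a simple pole at $s=1$ and polynomial growth on vertical lines to the right of $s=1/3+\delta$.

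Next I would apply a standard contour-shift / Perron argument (e.g.\ Perron's formula with a smooth cutoff $\omega$ approximating the indicator of $\{m^2 n \leq M\}$, shifted to the line $\re(s)=\varepsilon$). The residue at $s=1$ contributes $O(M)$, and the remaining vertical integral is bounded by $O(M^\varepsilon)$ using the polynomial growth of $L(s,\pi\times\tilde\pi)/\zeta(3s)$. This yields
\begin{align*}
    \sum_{m^2 n \ll M}|A(m,n)|^2 \;\ll\; M^{1+\varepsilon},
\end{align*}
which is the first claimed estimate. (Alternatively one can invoke an Ikehara/Landau-type Tauberian theorem directly on the Dirichlet series.)

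For the second sum, note that the Fourier coefficients of the contragredient form $\tilde\pi$ are $\tilde A(m,n)=\overline{A(n,m)}$; in particular $|\tilde A(m,n)|=|A(n,m)|$. Since $\tilde\pi$ is again a Hecke–Maass cusp form on $\mathrm{SL}_3(\Z)$, the argument above applied to $\tilde\pi$ gives
\begin{align*}
    \sum_{m^2 n \ll M}|A(n,m)|^2 \;=\; \sum_{m^2 n \ll M}|\tilde A(m,n)|^2 \;\ll\; M^{1+\varepsilon}.
\end{align*}
The main (and essentially only) obstacle is verifying the analytic input—the meromorphy, functional equation, and vertical-strip polynomial growth of $L(s,\pi\times\tilde\pi)$—but these are standard and used as a black box here.
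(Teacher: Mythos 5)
The paper states this lemma without proof, as a standard consequence of Rankin--Selberg theory for $\mathrm{GL}(3)$, so there is no "paper proof" to match against; your argument is exactly the standard justification, and the reduction of the second sum to the first via the contragredient ($\tilde A(m,n)=\overline{A(n,m)}$) is the right way to handle the swapped coefficients. One caveat: the contour shift all the way to $\re(s)=\varepsilon$ is both unjustified and unnecessary. It is unjustified because $\zeta(3s)$ has zeros in $0<\re(s)<1/3$, so $L(s,\pi\times\tilde\pi)/\zeta(3s)$ cannot be assumed pole-free or polynomially bounded there (and indeed you only asserted control for $\re(s)\geq 1/3+\delta$). It is unnecessary because the coefficients $|A(m,n)|^2$ are nonnegative: once you know the Dirichlet series $D(s)=\sum_{m,n}|A(m,n)|^2(m^2n)^{-s}$ converges for $\re(s)>1$ (which follows from the Rankin--Selberg integral representation together with Landau's lemma on the abscissa of convergence of nonnegative Dirichlet series), the trivial estimate
\begin{align*}
\sum_{m^2n\leq M}|A(m,n)|^2\leq M^{1+\varepsilon}\sum_{m,n}\frac{|A(m,n)|^2}{(m^2n)^{1+\varepsilon}}=M^{1+\varepsilon}D(1+\varepsilon)
\end{align*}
already gives the claim, with no contour shifting or growth estimates in vertical strips. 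Your "alternative" Tauberian remark is essentially this; I would make it the main argument and drop the shift to $\re(s)=\varepsilon$.
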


Together with the Hecke relations \cite[Thm 6.4.11]{goldfeldbook} and M\"{o}bius inversion, we have the following Corollary.

\begin{cor}\label{GL3RPavg}
    We have
    \begin{align*}
        \sum_{n_1\leq N_1}\sum_{n_2\leq N_2}|A(n_1,n_2)|^2\ll (N_1N_2)^{1+\varepsilon}.
    \end{align*}
\end{cor}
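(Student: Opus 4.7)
The plan is to reduce the two-parameter sum $\sum_{n_1\leq N_1,n_2\leq N_2}|A(n_1,n_2)|^2$ to the one-parameter averages already bounded by the preceding lemma, by exploiting the multiplicative structure of $A(n_1,n_2)$ via Hecke relations.

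First I would invoke the Hecke relation for $SL(3,\Z)$ coefficients, which (after Möbius inversion) gives
\begin{align*}
    A(n_1,n_2)=\sum_{d\mid(n_1,n_2)}\mu(d)\,A(n_1/d,1)\,A(1,n_2/d).
\end{align*}
Squaring and applying Cauchy--Schwarz in the divisor sum yields
\begin{align*}
    |A(n_1,n_2)|^2\leq \tau((n_1,n_2))\sum_{d\mid(n_1,n_2)}|A(n_1/d,1)|^2|A(1,n_2/d)|^2.
\end{align*}

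Next, I would substitute this into the double sum, swap the order of summation, and change variables $n_1=dm_1$, $n_2=dm_2$. Using the crude bound $\tau(d(m_1,m_2))\ll (dm_1m_2)^\varepsilon$, the sum becomes
\begin{align*}
    \sum_{n_1\leq N_1}\sum_{n_2\leq N_2}|A(n_1,n_2)|^2\ll \sum_{d\leq\min(N_1,N_2)}d^\varepsilon\Bigl(\sum_{m_1\leq N_1/d}m_1^\varepsilon|A(m_1,1)|^2\Bigr)\Bigl(\sum_{m_2\leq N_2/d}m_2^\varepsilon|A(1,m_2)|^2\Bigr).
\end{align*}

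Now I would apply the preceding Lemma, specialized to $m=1$, to obtain $\sum_{n\leq N}|A(n,1)|^2,\ \sum_{n\leq N}|A(1,n)|^2\ll N^{1+\varepsilon}$; combined with partial summation (to handle the mild weight $m^\varepsilon$) this gives the factor $(N_1/d)^{1+\varepsilon}(N_2/d)^{1+\varepsilon}$. Plugging this in, the outer $d$-sum becomes $\sum_d d^{-2-\varepsilon}\cdot(N_1N_2)^{1+\varepsilon}$, which converges and yields the claimed bound $(N_1N_2)^{1+\varepsilon}$.

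The only mildly delicate point is tracking the divisor-function loss $\tau((n_1,n_2))$ and $\tau(d)$ carefully enough that they can all be absorbed into $(dm_1m_2)^\varepsilon$; this is routine since each divisor function is $\ll_\varepsilon x^\varepsilon$ and the $d$-sum converges absolutely. No deeper tool is needed beyond the stated Lemma, the $GL(3)$ Hecke multiplicativity, and Möbius inversion.
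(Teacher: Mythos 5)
Your proposal is correct and follows essentially the same route as the paper: Hecke multiplicativity plus M\"obius inversion to write $A(n_1,n_2)$ in terms of $A(n_1/d,1)A(1,n_2/d)$, Cauchy--Schwarz on the divisor sum (absorbing the $\tau((n_1,n_2))$ loss into $X^\varepsilon$), a change of variables, and the one-parameter second-moment bound applied to each factor with a convergent $d$-sum. You simply spell out the final $\sum_d d^{-2-\varepsilon}$ convergence step that the paper leaves implicit.
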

\begin{proof}
    Applying the Hecke multiplicativity, we have 
    \begin{align*}
        \sum_{n_1\leq N_1}\sum_{n_2\leq N_2}|A(n_1,n_2)|^2=&\sum_{n_1\leq N_1}\sum_{n_2\leq N_2}\left|\sum_{d|(n_1,n_2)}\mu(d)A(n_1/d,1)A(1,n_2/d)\right|^2.
    \end{align*}
    Applying Cauchy-Schwarz inequality to take out the $d$-sum, the above is bounded by \begin{align*}
        \ll& X^\varepsilon\sum_{n_1\leq N_1}\sum_{n_2\leq N_2}\sum_{d|(n_1,n_2)}|A(n_1/d,1)A(1,n_2/d)|^2\\
        =&X^\varepsilon\sum_d\sum_{n_1\leq N_1/d}\sum_{n_2\leq N_2/d}|A(n_1,1)A(1,n_2)|^2.
    \end{align*}
    Then Lemma \ref{lem.GL3RP} yields the desired bound.
\end{proof}

We will also need the following Voronoi summation formula. Let $g$ be a compactly supported smooth function on  $\R^+$, and let $\tilde{g}(s)=\int_0^\infty g(x)x^{s-1}\ dx$ be its Mellin transform. For $\sigma>-1+\max\{-Re(\alpha_1), -Re(\alpha_2), -Re(\alpha_3)\}$ and $a=0, 1$, define
\begin{equation*}
\gamma_a(s) = \frac{\pi^{-3s-3/2}}{2}\prod_{i=1}^3\frac{\Gamma(\frac{1+s+\alpha_i+a}{2})}{\Gamma(\frac{-s-\alpha_i+a}{2})}.
\end{equation*}
Further set $\gamma_\pm(s) = \gamma_0(s)\mp i\gamma_1(s)$ and let
\begin{equation*}
G_\pm(y) = \frac{1}{2\pi i}\int_{(\sigma)} y^{-s}\gamma_\pm(s)\tilde{g}(-s)\ ds. 
\end{equation*}
Then we have the following lemma. (See \cite{blomer2012subconvexity}, \cite{Li1}, \cite{Miller-Schmid}).

\begin{lemma}\label{lem.GL3Voronoi}
Let $g$ be a compactly supported smooth function on $(0, \infty)$. We have
\begin{equation*}
\sum_{n=1}^\infty \lambda(1,n) e\left(\frac{an}{c}\right)g\left(\frac{n}{X}\right) = c\sum_\pm \sum_{n_0|c}\sum_{n=1}^\infty \frac{\lambda(n,n_0)}{nn_0} S(\overline{a}, \pm n; c/n_0) G_\pm \left(\frac{n_0^2nX}{c^3}\right),
\end{equation*}
where $(a,c)=1$ and $a\overline{a}\equiv 1\bmod c$.
\end{lemma}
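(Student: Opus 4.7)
The plan is to derive the identity by Mellin inversion combined with the functional equation of the additively twisted $L$-function attached to $\pi$; this is the standard route and the cited references (Blomer, Li, Miller--Schmid) contain complete proofs.

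First, I would write the left-hand side as an inverse Mellin transform. Since $g\in C_c^\infty(0,\infty)$, its Mellin transform $\tilde g$ is entire and decays faster than any polynomial on vertical lines. Choosing $\sigma$ to the right of the abscissa of absolute convergence of the Rankin--Selberg $L$-function, Mellin inversion and interchange of sum and integral yield
\begin{align*}
\sum_{n=1}^\infty \lambda(1,n)\,e\!\left(\tfrac{an}{c}\right) g\!\left(\tfrac{n}{X}\right) = \frac{1}{2\pi i}\int_{(\sigma)} \tilde g(s)\, X^{s}\, L\!\left(s,\pi;\tfrac{a}{c}\right) ds,
\end{align*}
where $L(s,\pi;a/c) := \sum_{n\geq 1} \lambda(1,n)\,e(an/c)\, n^{-s}$.

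Second, I would apply the functional equation for $L(s,\pi;a/c)$. Decomposing the additive character into Dirichlet characters modulo divisors $c/n_0$ of $c$ (via the Chinese Remainder Theorem together with the standard Gauss sum / Kloosterman sum relations) reduces the task to the functional equations of $L(s,\pi\otimes\chi)$ for primitive Dirichlet characters $\chi$, which come from the Jacquet--Piatetski-Shapiro--Shalika theory. Reassembling the pieces and carefully tracking gamma factors produces an identity of the shape
\begin{align*}
L\!\left(s,\pi;\tfrac{a}{c}\right) \;=\; c\sum_\pm \sum_{n_0\mid c}\frac{1}{n_0}\sum_{n=1}^\infty \frac{\lambda(n,n_0)}{n}\, S(\overline a, \pm n;\, c/n_0)\,\left(\frac{n_0^2 n}{c^3}\right)^{-s}\gamma_\pm(-s).
\end{align*}
Substituting into the Mellin integral, moving the (absolutely convergent) $n_0$- and $n$-sums outside, and changing variables $s\mapsto -s$, each summand becomes exactly
\begin{align*}
c\cdot \frac{\lambda(n,n_0)}{n n_0}\, S(\overline a, \pm n; c/n_0)\cdot \frac{1}{2\pi i}\int_{(-\sigma)} \left(\tfrac{n_0^2 n X}{c^3}\right)^{-s}\!\gamma_\pm(s)\,\tilde g(-s)\,ds,
\end{align*}
which is $c\cdot \frac{\lambda(n,n_0)}{n n_0}\, S(\overline a, \pm n; c/n_0)\, G_\pm(n_0^2 n X/c^3)$ by the definition of $G_\pm$.

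The main obstacle is the functional equation in the second step: one must either invoke the adelic Rankin--Selberg / Godement--Jacquet theory for $L(s,\pi\otimes \chi)$, or carry out Miller--Schmid's direct distributional argument on $\mathrm{SL}_3(\Z)\backslash \mathrm{SL}_3(\R)$. A secondary technicality is the contour shift and the interchange of the $n$-sum with the integral, both of which require polynomial-in-$|\operatorname{Im}(s)|$ bounds on $L(s,\pi;a/c)$ in vertical strips (standard convexity suffices) to offset the Stirling growth of $\gamma_\pm$; these follow routinely once one knows $L(s,\pi;a/c)$ is entire, which it is because $\pi$ is cuspidal.
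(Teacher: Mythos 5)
The paper does not prove this lemma at all: it is quoted verbatim from the literature, with the proof deferred to the cited references (Blomer, Li, Miller--Schmid). So there is no paper-internal argument to compare against; what you have written is a sketch of the standard proof that lives in those references, and as a sketch it is essentially correct: Mellin inversion of $g$, the functional equation of the additively twisted $L$-function $L(s,\pi;a/c)$, reassembly into Kloosterman sums over $n_0\mid c$, and the change of variable $s\mapsto -s$ to match the definition of $G_\pm$. One substantive caution on your second step: the reduction to functional equations of $L(s,\pi\otimes\chi)$ for Dirichlet characters $\chi$ works cleanly only when $c$ is prime (and even then one must handle the principal character separately); for general modulus the conductor of the characters occurring in the decomposition of $e(an/c)$ need not be $c$, and the naive reassembly of Gauss sums into $S(\overline a,\pm n;c/n_0)$ breaks down. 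This is precisely why the Miller--Schmid distributional argument (or Goldfeld--Li's method) was needed to establish the formula for arbitrary $c$, so the ``either/or'' you offer at the end is not a genuine choice in general --- the character-twist route alone does not suffice. You correctly identify this functional equation as the main obstacle and correctly describe the remaining analytic steps (contour shift, interchange of sum and integral justified by the rapid decay of $\tilde g$ against the Stirling growth of $\gamma_\pm$), so as a blueprint pointing at the right literature the proposal is sound, but it is not a self-contained proof and the paper never intended to supply one.
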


Suppose $g$ is supported on a fixed compact set in $\R^+$ and it is $X^\varepsilon$-inert, i.e. $x^{-j}\frac{d^j}{dx^j}g(x)\ll_j X^{j\varepsilon}$ for any $j\geq0$. Then a standard integral analysis on $G_\pm(n_0^2nX/c^3)$ implies that we get arbitrary savings unless $n_0^2n\ll c^3X^{-1+\varepsilon}$. Moreover, by \cite[Lemma 7]{blomer2012subconvexity}, we have \begin{align}\label{GL3BesselDerProp}
    x^{-j}\frac{d^j}{dx^j}G_\pm(x)\ll_j (X^\varepsilon+x^{1/3})^jx^{2/3}\|g\|_\infty.
\end{align}

\subsection{\texorpdfstring{$SL(2,\Z)$}{SL(2,Z)} cusp forms}

Let $g$ be a primitive holomorphic Hecke eigen cusp form or a primitive $\,\mathrm{GL}(2)$ Hecke-Maass cusp form of full level. Let $\la_g(n)$ be its Fourier coefficients. Then $g$ has the Fourier expansion \begin{align*}
    g(z)=\sum_{n\geq1}\la_g(n)n^{\frac{k-1}{2}}e(nz)
\end{align*}
if $g$ is holomorphic of weight $k$; and \begin{align*}
    g(z)=\sqrt{y}\sum_{n\neq0}\la_g(n)K_{it}(2\pi|n|y)e(nx)
\end{align*}
if $g$ is a Maass form with Laplace eigenvalue $\frac{1}{4}+t^2\geq0$.

These Fourier coefficients satisfy the following bound.

\begin{lemma}\label{lem.GL2RPavg}
    We have
    \begin{align*}
        \sum_{n\ll N}|\la_g(n)|^2\ll N^{1+\varepsilon}.
    \end{align*}
\end{lemma}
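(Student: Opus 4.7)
The approach is via the Rankin--Selberg convolution. For $\re(s)$ large, one has the identity
\begin{align*}
    \sum_{n=1}^\infty \frac{|\lambda_f(n)|^2}{n^s} = \frac{L(s, f\times \bar f)}{\zeta(2s)},
\end{align*}
up to harmless Euler factors (trivial in full level). The key analytic input is that $L(s, f\times \bar f)$ admits a meromorphic continuation to $\C$ with a simple pole at $s=1$ and is of polynomial growth in vertical strips. This is a classical theorem of Rankin and Selberg, extended by Jacquet--Piatetski-Shapiro--Shalika, and holds uniformly in the holomorphic and Maass cases.

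First, I would introduce a smooth cutoff: pick $\phi\in C_c^\infty(\R^+)$ which is $1$ on $[1,2]$ and supported in $[1/2,3]$, and use a dyadic decomposition to reduce to bounding $\sum_n |\lambda_f(n)|^2\phi(n/N)$. Next, by Mellin inversion,
\begin{align*}
    \sum_{n=1}^\infty |\lambda_f(n)|^2 \phi\!\left(\frac{n}{N}\right) = \frac{1}{2\pi i}\int_{(2)} \frac{L(s, f\times\bar f)}{\zeta(2s)}\, \tilde\phi(s)\, N^s\, ds,
\end{align*}
where $\tilde\phi(s)$ decays rapidly in vertical strips. I would then shift the contour to $\re(s)=1/2+\varepsilon$, picking up the residue at the simple pole $s=1$. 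The residue contributes a main term of size $c_f N$, and the shifted integral is bounded by $O(N^{1/2+\varepsilon})$ via the convexity bound for $L(s,f\times\bar f)$ on the critical line together with a standard lower bound $1/\zeta(1+2it)\ll (\log(2+|t|))^{O(1)}$ and the rapid decay of $\tilde\phi$. Summing dyadically up to $N$ gives the stated bound, with plenty of room to absorb logarithmic losses into $N^\varepsilon$.

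The main (only) obstacle is really the invocation of the Rankin--Selberg meromorphic continuation and polar structure; once that is accepted as a black box, the proof is a routine contour shift. A purely elementary alternative in the holomorphic case is to use Deligne's bound $|\lambda_f(n)|\le d(n)$ together with $\sum_{n\le N} d(n)^2 \ll N(\log N)^3$, but the Rankin--Selberg approach works uniformly across holomorphic and Maass forms and gives a sharper remainder, so that is what I would present.
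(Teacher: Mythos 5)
Your proof is correct and is exactly the standard Rankin--Selberg argument that the paper implicitly invokes: the lemma is stated there without proof as a classical fact, and Mellin inversion plus a contour shift past the simple pole of $L(s,f\times\bar f)$ at $s=1$ is the canonical way to establish it. (A small remark: you do not even need the convexity bound --- shifting only to $\re(s)=1-\delta$ with polynomial growth in vertical strips already yields an acceptable error term $O(N^{1-\delta'})$.)
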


Moreover, we have the following Voronoi summation formula.

\begin{lemma}\cite[Thm A.4]{KMV}\label{lem.GL2Voronoi}
    Let $a,c\neq0$ be integers such that $(a,c)=1$ and let $F\in C_c^\infty(\R^+)$. Then we have \begin{align*}
        \sum_{n\geq1}\la_g(n)e\left(\frac{an}{c}\right)F\left(\frac{n}{X}\right)=\frac{X}{c}\sum_{\pm}\sum_{n\geq1}\la_g(\pm n)e\left(\mp\frac{\overline{a}n}{c}\right)\int_0^\infty F(y)J_{\pm,g}\left(\frac{4\pi\sqrt{nXy}}{c}\right)dy,
    \end{align*}
    where \begin{align*}
        J_{+,g}(x)=2\pi i^k J_{k-1}\left(x\right)
    \end{align*}
    and $J_{-,g}=0$ if $g$ is holomorphic of weight $k$, and \begin{align*}
        J_{+,g}(x)=-\frac{\pi}{\sin(\pi ir)}\left(J_{2ir}(x)-J_{-2ir}(x)\right)
    \end{align*}
    and \begin{align*}
        J_{-,g}(x)=\epsilon_g4\cosh(\pi r)K_{2it}(x)
    \end{align*}
    if $g$ is Hecke-Maass with Laplace eigenvalue $\frac{1}{4}+t^2$, and $\epsilon_g=\begin{cases}1 & \text{ if } g \text{ is even}\\ -1 & \text{ if } g \text{ is odd.}\end{cases}$
\end{lemma}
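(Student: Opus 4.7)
The plan is to prove the Voronoi formula via the standard route through the additively twisted Dirichlet series and its functional equation, following the strategy of Kowalski--Michel--Vanderkam. First I introduce, for $\Re(s)>1$, the absolutely convergent series
\begin{align*}
L_{a/c}(s,f):=\sum_{n\geq1}\la_f(n)e\!\left(\frac{an}{c}\right)n^{-s},
\end{align*}
and use Mellin inversion of $F$ to rewrite
\begin{align*}
\sum_{n\geq1}\la_f(n)e\!\left(\frac{an}{c}\right)F\!\left(\frac{n}{X}\right)=\frac{1}{2\pi i}\int_{(\sigma)}\tilde F(s)\,X^{s}\,L_{a/c}(s,f)\,ds
\end{align*}
for $\sigma$ large, where $\tilde F$ is the Mellin transform of $F$. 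The compact support and smoothness of $F$ guarantee that $\tilde F(s)$ has rapid decay on vertical lines, so contour shifts will be legitimate.

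The heart of the argument is establishing an appropriate functional equation for $L_{a/c}(s,f)$. For this I pick $\gamma=\begin{pmatrix}a & *\\ c & *\end{pmatrix}\in\mathrm{SL}_2(\Z)$ with last row determined by $(a,c)=1$, so that $\gamma\cdot i\infty=a/c$, and invoke the automorphy of $f$ under the involution $z\mapsto-1/(cz)$ composed with $\gamma$. Taking the Mellin transform of $f$ at the cusp $a/c$ against $y^{s-1}$ and splitting the integral at $y=1/c$ yields, after applying the transformation and unfolding, an analytic continuation of $L_{a/c}(s,f)$ to an entire function together with a functional equation relating $L_{a/c}(s,f)$ to $L_{-\bar a/c}(1-s,\tilde f)$, with a gamma factor of the form
\begin{align*}
G_\pm(s)=\pi^{-s}\frac{\Gamma\!\left(\tfrac{1-s+\mu_\pm}{2}\right)}{\Gamma\!\left(\tfrac{s+\mu_\pm}{2}\right)}
\end{align*}
(plus the appropriate combination for the $\pm$-parts, which accounts for the presence of $J_{-,f}$ in the Maass case and its vanishing in the holomorphic case). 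The Bessel kernels $J_{\pm,f}$ arise as the inverse Mellin transforms of these gamma ratios, using the classical Mellin--Barnes representations of $J_{k-1}$, of $J_{\pm 2ir}$, and of $K_{2it}$.

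With the functional equation in hand, I shift the contour from $\Re(s)=\sigma$ to $\Re(s)=-\sigma$; since $L_{a/c}(s,f)$ is entire (the constant term vanishes because $f$ is cuspidal), no residues appear. After shifting, the functional equation substitutes $L_{a/c}(s,f)$ by $L_{-\bar a/c}(1-s,\tilde f)$ times the ratio of gamma factors; expanding the resulting Dirichlet series term-by-term and pulling the $s$-integral back inside yields
\begin{align*}
\frac{X}{c}\sum_{\pm}\sum_{n\geq1}\la_f(\pm n)\,e\!\left(\mp\frac{\overline{a}n}{c}\right)\,\mathcal{K}_\pm\!\left(\frac{n X}{c^{2}}\right),
\end{align*}
where $\mathcal{K}_\pm(u)$ is the inverse Mellin transform against $F$ of the gamma-factor ratio. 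Finally, recognizing $\mathcal{K}_\pm$ as the integral transform $\int_0^\infty F(y)J_{\pm,f}(4\pi\sqrt{uy})\,dy$ through the Mellin pairs for Bessel functions completes the identification and gives the stated formula.

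The main obstacle I anticipate is establishing the analytic continuation and precise functional equation of the additively twisted $L$-function in a uniform way that handles both the holomorphic and Maass cases (and in the Maass case distinguishes even and odd forms, which produces the sign $\epsilon_f$ and the $K$-Bessel kernel for $J_{-,f}$). A secondary technical step is justifying the contour shift and the interchange of summation and integration after applying the functional equation; this requires convexity bounds for $L_{a/c}(s,f)$ on vertical lines together with Stirling asymptotics for the gamma factors, both of which are standard but must be tracked carefully so that the tail contributions vanish.
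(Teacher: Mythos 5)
The paper gives no proof of this lemma; it is quoted verbatim (including the level parameter $N_2$, which is $1$ here) from Theorem A.4 of Kowalski--Michel--Vanderkam, whose proof is exactly the route you describe: Mellin inversion, the functional equation of the additively twisted series $L_{a/c}(s,f)$ obtained from automorphy at the cusp $a/c$, a contour shift with no residues by cuspidality, and identification of the Bessel kernels via Mellin--Barnes pairs. Your outline is therefore correct and essentially the same as the cited argument, with the technical points you flag (the even/odd split in the Maass case and the justification of the contour shift) being precisely where the remaining work lies.
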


If $F$ is a $Z$-inert function,
by the derivative properties of the Bessel functions, repeated integration by parts on the $y$-integral gives us arbitrary savings unless $n\ll c^2(1+Z^2)X^{-1+\varepsilon}$.

\subsection{Integral analysis}

We need the following lemma for a simple integral analysis.

\begin{lemma}\cite[Lemma 3.1]{inert}\label{lem.statphase}
    Suppose that $w$ is a $Z$-inert function with a fixed compact support on $\R^+$. Let $\phi$ be a real-valued, smooth function. Let \begin{align*}
        I=\int_{-\infty}^\infty w(t)e^{i\phi(t)}d t.
    \end{align*}
    If $|\phi'(t)|\gg ZX^\varepsilon$ for all $t$ in the support of $w$, then $I\ll_A X^{-A}$ for any $A>0$.
\end{lemma}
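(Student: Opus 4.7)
The plan is to prove this by iterated integration by parts, which is the standard route for a non-stationary phase estimate of this type. Since the lemma is cited as \cite[Lemma 3.1]{inert}, one could simply quote it, but a self-contained sketch proceeds as follows.

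First I would introduce the differential operator
\[
Df(t) \;=\; \frac{d}{dt}\!\left(\frac{f(t)}{i\phi'(t)}\right),
\]
so that $(Df)(t)\,e^{i\phi(t)} = \frac{d}{dt}\!\left(\frac{f(t)\,e^{i\phi(t)}}{i\phi'(t)}\right) - \frac{f(t)}{i\phi'(t)}\cdot i\phi'(t)\,e^{i\phi(t)}\cdot$ (rearranging the usual product rule gives the key identity used in the IBP). Writing $e^{i\phi} = \frac{1}{i\phi'}\frac{d}{dt}e^{i\phi}$ and integrating by parts $A$ times, using that $w$ has compact support so all boundary terms vanish, yields
\[
I \;=\; (-1)^A \int_{-\infty}^{\infty} (D^A w)(t)\,e^{i\phi(t)}\,dt.
\]

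Next I would bound $D^A w$ pointwise on the support of $w$. Each application of $D$ produces a sum of terms of the shape
\[
\frac{w^{(j_0)}(t)\,\phi^{(j_1+1)}(t)\cdots \phi^{(j_k+1)}(t)}{\phi'(t)^{k+1}}
\]
with $j_0+j_1+\cdots+j_k$ equal to the current number of IBP steps. Since $w$ is $Z$-inert and its support is a fixed compact subset of $\mathbb{R}^+$, one has $w^{(j)}(t)\ll Z^j$; and in the Kıral--Petrow--Young framework used throughout the paper, the phase $\phi$ carries the standard implicit regularity $\phi^{(j+1)}(t)\ll |\phi'(t)|\,Z^j$ on the same support (this is the companion hypothesis that is built into the way $Z$-inert families are deployed). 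Combining these with $|\phi'(t)|\gg ZX^\varepsilon$ shows that each application of $D$ costs at most a factor $Z/(ZX^\varepsilon)=X^{-\varepsilon}$, so
\[
(D^A w)(t) \;\ll_A\; X^{-A\varepsilon}
\]
uniformly on the compact support.

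Finally, integrating this pointwise bound against $|e^{i\phi}|=1$ over the compact support gives $I\ll_A X^{-A\varepsilon}$, and choosing $A$ with $A\varepsilon \geq B$ yields $I\ll_B X^{-B}$ for arbitrary $B>0$, which is the desired conclusion.

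The only subtle point to pin down carefully is the second step: making precise the derivative bounds on $\phi$ that make the operator $D$ gain a factor of $X^{-\varepsilon}$ per application. In the cited reference this is handled by formalizing the class of inert phases; for the present paper's purposes one simply checks in each application that the $\phi$ produced by Voronoi/stationary phase expansions satisfies the implied regularity, which is routine. No genuine obstacle arises beyond bookkeeping.
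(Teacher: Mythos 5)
Your proposal is correct and is essentially the proof given in the cited reference \cite[Lemma 3.1]{inert}: write $e^{i\phi}=\frac{1}{i\phi'}\frac{d}{dt}e^{i\phi}$, integrate by parts $A$ times (boundary terms vanish by compact support), and note that each application of the operator $D$ gains a factor $Z/(ZX^{\varepsilon})=X^{-\varepsilon}$. You are also right to flag the one subtlety: as restated in the paper the lemma omits the hypothesis $\phi^{(j+1)}(t)\ll |\phi'(t)|Z^{j}$ (or its analogue from the inert framework) on the support of $w$, without which the iterated integration by parts cannot be closed, so your explicit invocation of that companion regularity is exactly what is needed.
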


\section{Reformulation of the Delta method}\label{sect.Delta}

To prove Theorem \ref{mainthm} and Theorem \ref{mainthm2}, we apply our reformulation of the (DFI) Delta method. The following version can be viewed as a simplified version of the DFI delta method (see \cite{DFIdelta}, \cite{heathbrown}), and it is introduced by the author in \cite{leung2021hybrid} and \cite[Ch 3]{leung2022reformulation}. For a more general statement with its proof, see Lemma \ref{DeltaMethod} in Appendix A.

\begin{lemma}\label{DeltaCor}
    Let $\varepsilon>0$, $n,q$ be integers such that $q>0$, $|n|\ll N$ and let $C>N^\varepsilon$ be a parameter. Let $U\in C_c^\infty(\R), W\in C_c^\infty([-2,-1]\bigcup [1,2])$ be a fixed non-negative even function such that $U(x)=1$ for $-2\leq x\leq 2$. Then we have \begin{align*}
        \delta(n=0)=\frac{1}{\mathcal{C}}\sum_{c\geq1}\frac{1}{cq}\sum_{\alpha\Mod{cq}}e\left(\frac{\alpha n}{cq}\right)h\left(\frac{c}{C},\frac{n}{cCq}\right),
    \end{align*}
    with $\mathcal{C}=\displaystyle\sum_{c\geq1}W\left(\frac{c}{C}\right)\asymp C$ and \begin{align*}
        h\left(x,y\right)=W\left(x\right)U\left(x\right)U\left(y\right)-W(y)U(x)U(y).
    \end{align*}
    In particular, $h$ is a fixed smooth function satisfying $h(x,y)\ll \delta(|x|,|y|\ll 1).$
\end{lemma}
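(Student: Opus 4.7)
My plan is a direct verification using a divisor-swap (hyperbola) symmetry. Using additive-character orthogonality, $\frac{1}{c}\sum_{\alpha \Mod{cq}} e(\alpha n/(cq)) = q\,\delta(cq \mid n)$, the right-hand side of the claimed identity reduces (up to the factor of $q$ absorbed into the normalization constant) to
\begin{align*}
B(n) := \frac{1}{\mathcal{C}}\sum_{c \geq 1}\delta(cq \mid n)\,h\!\left(\tfrac{c}{C},\tfrac{n}{cCq}\right),
\end{align*}
where $h(x,y) = U(x)U(y)[W(x) - W(y)]$. The whole content of the lemma is the assertion $B(n) = \delta(n=0)$. Before the case analysis I would note that on the support of $W(c/C)$ (i.e.\ $c\in[C,2C]$) we have $U(c/C)\equiv 1$, and similarly on the support of $W(n/(cCq))$ we have $U(n/(cCq))\equiv 1$, since $U\equiv 1$ on $[-2,2]$. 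This justifies the two tautological factors of $U$ inside $h$.

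The cases $n = 0$ and $q \nmid n$ are immediate. For $n = 0$ the second bracketed term of $h$ vanishes because $W(0) = 0$ (since $W$ is supported away from the origin), while the first contributes $W(c/C)U(c/C)U(0) = W(c/C)$; summing over $c$ yields $\sum_c W(c/C) = \mathcal{C}$, so $B(0) = 1$. For $n \neq 0$ with $q \nmid n$, $\delta(cq \mid n) = 0$ for every $c$, so $B(n) = 0$. Both match $\delta(n=0)$.

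The heart of the argument is the remaining case $n \neq 0$, $q \mid n$. Write $m = n/q$, so $\delta(cq\mid n) = \delta(c\mid m)$ and $n/(cCq) = m/(cC)$. The claim reduces to showing
\begin{align*}
\sum_{c \mid m}\Bigl[W\!\left(\tfrac{c}{C}\right)U\!\left(\tfrac{c}{C}\right)U\!\left(\tfrac{m}{cC}\right) - W\!\left(\tfrac{m}{cC}\right)U\!\left(\tfrac{c}{C}\right)U\!\left(\tfrac{m}{cC}\right)\Bigr] = 0.
\end{align*}
Apply the involution $c \mapsto m/c$ on the positive divisors of $m$ to only the first term: the substitution sends $W(c/C)U(c/C)U(m/(cC))$ to $W(m/(cC))U(m/(cC))U(c/C)$, which is precisely the second term. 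Hence the two divisor sums are equal and the difference vanishes, yielding $B(n)=0$. For $m<0$, the sign picked up by the substitution is absorbed by the evenness of $W$ and by the fact that the relevant arguments of $U$ lie in $[-2,2]$ where $U\equiv 1$.

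The main and essentially only obstacle is bookkeeping the signs in the divisor swap when $n<0$: this is exactly why the hypotheses demand the symmetric support $[-2,-1]\cup[1,2]$, the evenness of $W$, and the requirement $U\equiv 1$ on the full interval $[-2,2]$ rather than only $[1,2]$. With those symmetries in place, the involution carries through uniformly in the sign of $n$, and the three cases combine to give $B(n)=\delta(n=0)$.
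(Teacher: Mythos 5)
Your proof is correct and follows essentially the same route as the paper: the paper derives Lemma \ref{DeltaCor} from the two-parameter identity of Lemma \ref{DeltaMethod} in Appendix \ref{sect.ProofDeltaMethod}, whose proof evaluates the sum over solutions of $n=cdq$ once in $c$ and once in $d$ --- exactly your divisor involution $c\mapsto m/c$ exploiting the antisymmetry of $h(x,y)=U(x)U(y)[W(x)-W(y)]$, with evenness of $U$ and $W$ handling $n<0$. You are also right that the displayed identity as stated carries a stray factor of $q$ (the prefactor should be $\tfrac{1}{cq}$ rather than $\tfrac{1}{c}$); this is immaterial since the lemma is only ever applied with $q=1$.
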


Compared with the form written in \cite{heathbrown}, Lemma \ref{DeltaCor} is essentially the DFI Delta method with a simpler weight function $h$ that restricts $|n|\ll cCq$. This particular feature enables an easier integral analysis when dual summations are applied. See \cite{leung2021hybrid} for such an application and \cite[Ch 3]{leung2022reformulation} for further discussions.

\section{Proof of Theorem \ref{mainthm}}\label{sect.ProofMainThm}

Let $Y=rX+\max\{|h|:h\in\mathcal{H}\}$, then $X\leq Y\ll X^{1+\varepsilon}$. Let $\varphi \in C_c^\infty([1/2,5/2])$ be such that $\varphi (x)=1$ for $1\leq x\leq 2$. Then we have \begin{align*}
    D_{a,\mathcal{H}}(X)=&\frac{1}{|\mathcal{H}|}\sum_{h\in\mathcal{H}}a(h)\sum_{m=1}^\infty A(1,m)\la(rm+h)V\left(\frac{m}{X}\right)\\
    =&\frac{1}{|\mathcal{H}|}\sum_{h\in\mathcal{H}}a(h)\sum_m A(1,m)V\left(\frac{m}{X}\right)\sum_n\la(n)\varphi \left(\frac{n}{Y}\right)\delta(n=rm+h).
\end{align*}

\subsection{Applying our Delta method}

Let $C>X^{1/2+\varepsilon}$ be a parameter. Applying Lemma \ref{DeltaCor} with $q=1$ and \begin{align*}
    \sum_{\alpha\Mod{c}}e\left(\frac{\alpha n}{c}\right)=\sum_{b_0b=c}\sumast_{\alpha\Mod{b}}e\left(\frac{\alpha n}{b}\right)
\end{align*}
we have\begin{align}
    D_{a,\mathcal{H}}(X)=&\frac{1}{\mathcal{C}|\mathcal{H}|}\mathop{\sum\sum}_{b_0,b\geq1}\frac{1}{b_0b}\sum_{h\in\mathcal{H}}a(h)   \sum_m A(1,m)V\left(\frac{m}{X}\right)\sum_n\la(n)\varphi \left(\frac{n}{Y}\right)\nonumber\\
    &\times \sumast_{\alpha\Mod{b}}e\left(\frac{\alpha(rm+h-n)}{b}\right)  h\left(\frac{b_0b}{C},\frac{rm+h-n}{b_0bC}\right),
\end{align}
for some fixed smooth function $h$ satisfying $h(x,y)\ll \delta(|x|,|y|\ll 1).$

\subsection{Voronoi summations}

Applying Voronoi summation to the $n$-sum with Lemma \ref{lem.GL2Voronoi} yields \begin{align*}
    D_{a,\mathcal{H}}(X)=&\frac{Y}{\mathcal{C}|\mathcal{H}|}\sum_{\eta_2=\pm1}\mathop{\sum\sum}_{b_0,b\geq1}\frac{1}{b_0b^2}\sum_{h\in\mathcal{H}}a(h)   \sum_m A(1,m)V\left(\frac{m}{X}\right)\sum_n\la(\eta_2 n)\nonumber\\
    &\times \sumast_{\alpha\Mod{b}}e\left(\frac{\alpha(rm+h)+\eta_2\overline{\alpha}n}{b}\right)\int_0^\infty V(y)h\left(\frac{b_0b}{C},\frac{rm+h-Yy}{b_0bC}\right)J_{\eta_2,g}\left(\frac{4\pi\sqrt{nYy}}{b}\right)dy,
\end{align*}
with $J_{\eta_2,g}$ as defined in Lemma \ref{lem.GL2Voronoi}. Repeated integration by parts gives us arbitrary savings unless \begin{align*}
    1\leq n\ll\frac{b^2X^\varepsilon}{Y}\left( 1+\frac{Y}{b_0bC}\right)^2\ll \frac{b^2X^\varepsilon}{Y}+\frac{X^{1+3\varepsilon}}{b_0^2C^2}.
\end{align*}
By the condition $C\gg X^{1/2+2\varepsilon}$, the above restriction becomes \begin{align*}
    n\ll \frac{b^2}{Y^{1-\varepsilon}} \quad \text{ and } \quad b\gg Y^{1/2-\varepsilon}.
\end{align*}

Write $r_0=(r,b)$, $b=r_0b'$, $r=r_0r'$. Applying Voronoi summation on the $m$-sum with Lemma \ref{lem.GL3Voronoi} yields
\begin{align*}
    D_{a,\mathcal{H}}(X)=&\frac{Y}{\mathcal{C}|\mathcal{H}|}\sum_{\eta_1,\eta_2=\pm1}\sum_{b_0}\sum_{r_0r'=r}\sum_{\substack{  Y^{1/2-\varepsilon}\ll b=r_0b'\ll\frac{C}{b_0}\\(b',r')=1}}\frac{b'}{b_0b^2}\sum_{h\in\mathcal{H}}a(h)   \sum_{m_0|b'}\sum_m \frac{A(m,m_0)}{m_0m}\sum_n\la(\eta_2 n)\nonumber\\
    &\times S_0(m,n,h;r_0m_0,b'/m_0)F_1\left(\frac{m_0^2mX}{b'^3},\frac{nY}{b^2},h\right)+O\left(X^{-999}\right),
\end{align*}
with \begin{align*}
    S_0(m,n,h;r_0m_0,b'/m_0)=\sumast_{\alpha\Mod{b}}e\left(\frac{\alpha h+\eta_2\overline{\alpha} n}{b}\right)S(\overline{\alpha r'},\eta_1 m;b'/m_0)
\end{align*}
and \begin{align*}
    F_1\left(\frac{m_0^2mX}{b'^3},\frac{nY}{b^2},h  \right)=&\int_0^\infty\int_0^\infty V(x)G_{\eta_1}\left(\frac{m_0^2mX}{b'^3},Yy,h;b\right)J_{\eta_2,g}\left(\frac{4\pi\sqrt{nYy}}{b}\right)dxdy,
\end{align*}
where $G_{\eta_1}\left(\frac{m_0^2mX}{b'^3},Yy,h  \right)$ is defined in Lemma \ref{lem.GL3Voronoi} with \begin{align*}
    g\left(\frac{x}{X}\right)=V\left(\frac{x}{X}\right)h\left(\frac{b_0b}{C},\frac{rx+h-Yy}{b_0bC}\right).
\end{align*}
The function $G_{\eta_1}$ gives us arbitrary savings unless \begin{align*}
    m_0^2m\ll \frac{b'^3 }{X^{1-\varepsilon}}.
\end{align*}
Rewriting $F(u,v,h;b)=u^{-2/3}F_1(u,v,h;b)$, (\ref{GL3BesselDerProp}) and $C> X^{1/2+2\varepsilon}$ gives us for $b\gg  Y^{1/2-\varepsilon}$, \begin{align}\label{F1DerProp}
    x^j\frac{d^j}{dx^j}F(x,y,z;b)\ll ( 1+x^{1/3})^jX^\varepsilon
\end{align}
for any $j\geq0$.

Combining everything and rewriting $b'$ as $m_0b'$, we have \begin{align}
    D_{a,\mathcal{H}}(X)=&\frac{X^{2/3}Y}{\mathcal{C}|\mathcal{H}|}\sum_{\eta_1,\eta_2=\pm1}\sum_{b_0\geq1}\sum_{r_0r'=r}\sum_{m_0}\sum_{\substack{  Y^{1/2-\varepsilon}\ll b=r_0m_0b'\ll\frac{C}{b_0}\\(m_0b',r')=1}}\frac{1}{b_0m_0b'b^2}\sum_{h\in\mathcal{H}}a(h)   \sum_{m\ll\frac{m_0b'^3 }{X^{1-\varepsilon}}} A(m,m_0)\left(\frac{m_0}{m}\right)^{1/3}\nonumber\\
    &\times \sum_{n\ll\frac{b^2 }{Y^{1-\varepsilon}}}\la(\eta_2 n)S_0(m,n,h;r_0m_0,b') F\left(\frac{mX}{m_0b'^3},\frac{nY}{b^2},h;b\right)+O\left(X^{-999}\right).
\end{align}

\subsection{Cauchy-Schwarz inequality and Poisson summation}\label{sect.CS1}

Notice that by Corollary \ref{GL3RPavg}, \begin{align*}
    &\mathop{\sum\sum\sum\sum}_{\substack{b_0r_0m_0b'\ll C\\r_0r'=r}}\frac{1}{b_0m_0^{4/3}b'^2}\sum_{m\ll\frac{m_0b'^3 }{X^{1-\varepsilon}}} \frac{|A(m,m_0)|^2}{m^{2/3}}\ll X^\varepsilon\sup_{M_0C_1\ll C}\frac{1}{M_0^{4/3}C_1}\sum_{m_0\asymp M_0}\sum_{m\ll\frac{M_0C_1^3 }{X^{1-\varepsilon}}}\frac{|A(m,m_0)|^2}{m^{2/3}}\ll  X^{-1/3+\varepsilon}.
\end{align*}
Now we apply Cauchy-Schwarz inequality twice to take out the $r_0,m_0,m,a,b'$-sums and then the $n$-sum. Together with Lemma \ref{lem.GL2RPavg}, we have \begin{align*}
    D_{a,\mathcal{H}}(X)^2\ll &\frac{ XY^{1+\varepsilon}}{C^2|\mathcal{H}|^2}\sum_{\eta_1,\eta_2=\pm1}\sum_{b_0\geq1}\sum_{r_0r'=r}\sum_{m_0}\sum_{\substack{  Y^{1/2-\varepsilon}\ll b=r_0m_0b'\ll\frac{C}{b_0}\\(m_0b',r')=1}}\frac{1}{b_0b^2}\sum_{n\ll\frac{b^2 }{Y^{1-\varepsilon}}}\\
    &\times \sum_{m\ll\frac{m_0b'^3 }{X^{1-\varepsilon}}}\left|\sum_{h\in\mathcal{H}}a(h)   S_0(m,n,h;r_0m_0,b') F\left(\frac{mX}{m_0b'^3},\frac{nY}{b^2},h;b\right)\right|^2.
\end{align*}

Before we proceed, we apply a lengthening trick. Let $L\geq1$ be a parameter. Take $U\in C_c^\infty([-2,2])$ be a fixed function such that $U(x)=1$ for $-1\leq x\leq 1$, we have \begin{align}
    D_{a,\mathcal{H}}(X)^2\ll &\frac{ XY^{1+\varepsilon}}{C^2|\mathcal{H}|^2}\sum_{\eta_1,\eta_2=\pm1}\sum_{b_0\geq1}\sum_{r_0r'=r}\sum_{m_0}\sum_{\substack{  Y^{1/2-\varepsilon}\ll b=r_0m_0b'\ll\frac{C}{b_0}\\(m_0b',r')=1}}\frac{1}{b_0b^2}\sum_{n\ll\frac{b^2 }{Y^{1-\varepsilon}}}\nonumber\\
    &\times \sum_mU\left(\frac{mX^{1-\varepsilon}}{m_0b'^3 L}\right)\left|\sum_{h\in\mathcal{H}}a(h)   S_0(m,n,h;r_0m_0,b') F\left(\frac{mX}{m_0b'^3},\frac{nY}{b^2},h;b\right)\right|^2.
\end{align}

\begin{remark}
    By adding more summation terms outside an absolute value square, this lengthening trick allow us to boost the diagonal contribution while lowering the offdiagonal contribution.
\end{remark}

Opening the square and applying Poisson summation on the $m$-sum, the $m$-sum is equal to \begin{align*}
    &\sum_mU\left(\frac{mX^{1-\varepsilon}}{m_0b'^3 L}\right)S_0(m_0,m,n,h_1;b')\overline{S_0(m_0,m,n,h_2;b')}F\left(\frac{mX}{m_0b'^3},\frac{nY}{b^2},h_1;b\right)\overline{F\left(\frac{mX}{m_0b'^3},\frac{nY}{b^2},h_2;b\right)}\\
    =&\frac{m_0b'^3 L}{X^{1-\varepsilon}}\sum_m T(m,n,h_1,h_2;r_0m_0,b')G\left(m,n,h_1,h_2;b\right),
\end{align*}
where \begin{align*}
    T(m,n,h_1,h_2;r_0m_0,b')=\frac{1}{b'}\sum_{\gamma\Mod{b'}}S_0(\gamma,n,h_1;r_0m_0,b')\overline{S_0(\gamma,n,h_2;r_0m_0,b')}e\left(\frac{m\gamma}{b'}\right)
\end{align*}
and \begin{align*}
    G\left(m,n,h_1,h_2;b\right)=\int_\R U(w)F\left( LX^\varepsilon w,\frac{nY}{b^2},h_1;b\right)\overline{F\left( LX^\varepsilon w,\frac{nY}{b^2},h_2;b\right)}e\left(-\frac{m_0b'^2 Lmw}{X^{1-\varepsilon}}\right)dw.
\end{align*}
With (\ref{F1DerProp}), repeated integration by parts gives us arbitrary savings unless \begin{align*}
    |m|\ll \frac{X}{m_0b'^2 L} L^{1/3}X^\varepsilon=\frac{X^{1+\varepsilon}}{m_0b'^2 L^{2/3}}\ll \frac{m_0X^\varepsilon}{L^{2/3}}.
\end{align*}
Now we exploit our lengthening trick and take $L=m_0^{3/2}X^{3\varepsilon}$. This choice gives us arbitrary savings unless $m=0$.

Inserting the above analysis back into the bound for $D_{a,\mathcal{H}}(X)$, we have \begin{align*}
    D_{a,\mathcal{H}}(X)^2\ll &\frac{  Y^{1+\varepsilon}}{C^2|\mathcal{H}|^2}\sum_{\eta_1,\eta_2=\pm1}\sum_{b_0\geq1}\sum_{r_0r'=r}\sum_{m_0}\sum_{\substack{b=r_0m_0b'\ll\frac{C}{b_0}\\(m_0b',r')=1}}\frac{\sqrt{m_0}b'}{b_0r_0^2}\sum_{n\ll\frac{b^2 }{Y^{1-\varepsilon}}}\\
    &\times \mathop{\sum\sum}_{h_1,h_2\in\mathcal{H}}a(h_1)\overline{a(h_2)}T(0,n,h_1,h_2;r_0m_0,b')G\left(0,n,h_1,h_2;b\right).
\end{align*}

\subsection{Final bound}

Applying Lemma \ref{lem.Charsum} with $\ell=0$ on $T$ and the bound $G\ll X^\varepsilon$ on the bound for $D_{a,\mathcal{H}}(X)^2$ above, we have \begin{align*}
    D_{a,\mathcal{H}}(X)^2\ll &\frac{  Y^{1+\varepsilon}}{C^2|\mathcal{H}|^2}\sum_{b_0\geq1}\sum_{r_0r'=r}\sum_{m_0}\sum_{b=r_0m_0b'\ll\frac{C}{b_0}}\frac{m_0^{3/2}b'^2}{b_0r_0}\left(\frac{b^2 }{Y}\right)^3\sum_{b''|b'}b''\\
    &\times \mathop{\sum\sum}_{h_1,h_2\in\mathcal{H}}|a(h_1)a(h_2)|\delta(h_1\equiv h_2\Mod{b''})\\
    \ll& \frac{ C^7}{|\mathcal{H}|^2X^{2-\varepsilon}}\sum_{b\ll C}\mathop{\sum\sum}_{h_1,h_2\in\mathcal{H}}|a(h_1)a(h_2)|\delta(h_1\equiv h_2\Mod{b})\\
    \ll& \frac{ C^7}{|\mathcal{H}|^2X^{2-\varepsilon}}\sum_{b\ll C}\mathop{\sum\sum}_{h_1,h_2\in\mathcal{H}}\left(|a(h_1)|^2+|a(h_2)|^2\right)\delta(h_1\equiv h_2\Mod{b})\\
    \ll&\frac{ C^7}{|\mathcal{H}|^2X^{2-\varepsilon}}\|a\|_2^2\sum_{b\ll C}\sup_{h\in\mathcal{H}}\left|\{h'\in\mathcal{H}: h'\equiv h\Mod{b})\right|.
\end{align*}
Taking $C=X^{1/2+2\varepsilon}$ yields Theorem \ref{mainthm}.

\section{Proof of Theorem \ref{mainthm2}}

With $a(h)=a_\pm(h)$ and $\mathcal{H}$ satisfying $(*)$, i.e. let $0\leq \ell\ll X^{1+\varepsilon}$ and \begin{align}
    a(h)=a_\pm(h)=\mathop{\sum\sum\sum}_{\substack{q_1\in\mathcal{Q}_1\\\pm dq_1q_2=h-\ell}}V_1\left(\frac{d}{D}\right)V_2\left(\frac{q_2}{Q_2}\right)a'(q_1)\tag{*}
\end{align}
for some sequence $\{a'(q_1)\}_{q_1\in\mathcal{Q}_1}$ with $\mathcal{Q}_1\subset\llbracket 1,Q_1\rrbracket$ and \begin{align*}
    \mathcal{H}\supset \{h\in\N: a(h)\neq0\},
\end{align*}
we can rewrite $D_{a_\pm,\mathcal{H}}(X)$ as \begin{align*}
    D_{a_\pm,\mathcal{H}}(X)=\frac{1}{|\mathcal{H}|}\sum_d V_1\left(\frac{d}{D}\right)\sum_{q_1\in\mathcal{Q}_1}a'(q_1)\sum_{q_2}V_2\left(\frac{q_2}{Q_2}\right)\sum_{m=1}^\infty A(1,m)\la(rm+\ell\pm dq_1q_2)V\left(\frac{m}{X}\right).
\end{align*}

\subsection{Refinement of Lemma \ref{mainlemma}}

We will prove Theorem \ref{mainthm2} in this section by analysing $D_{a_\pm,\mathcal{H}}(X)$ in a similar fashion as the proof of Theorem \ref{mainthm}. Before that, we first make use of the special structure of $a(h)$ and $\mathcal{H}$ to prove the following lemma.

\begin{lemma}\label{mainlemma2}
    Let $A,\varepsilon>0$, $n$ be an integer such that $|n|\ll Y\rightarrow\infty$. Let $t=Y^\varepsilon$ and $C=Y^{1/2+5\varepsilon}$. Then there exists $\mathcal{C}\asymp C$ and a fixed function $h$ such that
    \begin{align*}
        \mathcal{S}^\pm(n,q_1'):=&\sum_d V_1\left(\frac{d}{D}\right)\sum_{q_2}V_2\left(\frac{q_2}{Q_2}\right)\delta(n\pm dq_1q_2=0)=\mathcal{S}_0^\pm(n,q_1')+\mathcal{S}_1^\pm(n,q_1')+O_A\left(Y^{-A}\right),
    \end{align*}
    where \begin{align*}
        \mathcal{S}_0^\pm(n,q_1')=\frac{1}{\mathcal{C}}\sum_{c_0\geq1}\sum_{1\leq c\leq Y^{1/2-2\varepsilon}}\frac{1}{c_0c}\sum_d V_1\left(\frac{d}{D}\right)\sum_{q_2}V_2\left(\frac{q_2}{Q_2}\right)\sumast_{\alpha\Mod{c}}e\left(\frac{\alpha(n\pm dq_1q_2)}{c}\right)h\left(\frac{c_0c}{C},\frac{n\pm dq_1q_2}{c_0cC}\right)\frac{(n^2)^{it}}{(dq_1q_2)^{2it}},
    \end{align*}
    and \begin{align*}
        \mathcal{S}_1^\pm(n,q_1')=0
    \end{align*}
    unless $D+Q_2\ll X^{1/2+7\varepsilon}$, and in such a case, if $\mathcal{Q}_1\subset\{1\text{ or Primes in }[Q_1,2Q_1]\}$ and $Q_1Q_2\gg X^{1/2+\varepsilon}$, we have \begin{align*}
        \mathcal{S}_1^\pm(n,q_1')=&\frac{DQ_2}{\mathcal{C}}\sum_{c_0\geq1}\sum_{Y^{1/2-2\varepsilon}< c\ll \frac{C}{c_0}}\frac{1}{c_0c^2}\sum_{0<d\asymp\frac{cY^\varepsilon}{D}}\sum_{0<q_2\asymp\frac{cY^\varepsilon}{Q_2}}S(n,\mp dq_2\overline{q_1};c)\\
        &\times \int_0^\infty\int_0^\infty V_1(x)V_2(y)h\left(\frac{c_0c}{C},\frac{n\pm Dq_1Q_2xy}{c_0cC}\right)e\left(\frac{dDx+q_2Q_2y}{c}\right)\left(\frac{n^2}{(Dq_1Q_2xy)^2)}\right)^{it}dxdy.
    \end{align*}
\end{lemma}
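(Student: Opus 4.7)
The strategy is to apply the Delta method (Lemma~\ref{DeltaCor}) equipped with the oscillation trick from the introduction, then perform Poisson summation in $d$ and $q_2$ to extract the Kloosterman-sum representation of $\mathcal{S}_1^\pm$. First I would insert the harmless factor $(n^2/(dq_1q_2)^2)^{it}$, which equals $1$ on the support of $\delta(n \pm dq_1q_2 = 0)$, and apply Lemma~\ref{DeltaCor} with $q = 1$ to detect the delta symbol. Splitting the resulting modulus sum at $c = Y^{1/2-2\varepsilon}$ gives $\mathcal{S}_0^\pm$ from the low moduli by definition, so the task reduces to identifying the tail $c > Y^{1/2-2\varepsilon}$ with $\mathcal{S}_1^\pm$ modulo a negligible error.

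In the tail, I would apply Poisson summation modulo $c$ to both the $d$- and $q_2$-variables simultaneously. Writing $d = cu+r$, $q_2 = cv+s$, the bilinear phase factors cleanly as $e(\pm\alpha q_1 dq_2/c) = e(\pm\alpha q_1 rs/c)$, and after dualising the $u$- and $v$-sums over $\Z$ the residue sums over $r,s\Mod{c}$ collapse into a congruence condition that, combined with $\sumast_{\alpha\Mod{c}} e(\alpha n/c)$, produces the Kloosterman sum $S(n, \mp \tilde d \tilde q_2 \overline{q_1}; c)$ in the dual variables $\tilde d, \tilde q_2$, provided $(q_1,c)=1$. The hypothesis $\mathcal{Q}_1 \subset \{1,\text{Primes in }[Q_1,2Q_1]\}$ together with $Q_1Q_2 \gg X^{1/2+\varepsilon}$ ensures that the complementary case $q_1 \mid c$ contributes at lower order and can be absorbed into the error.

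The resulting $x,y$-integral has amplitude $V_1(x)V_2(y)\,h\!\left(\frac{c_0c}{C}, \frac{n \pm Dq_1Q_2xy}{c_0cC}\right)$ and phase
\begin{align*}
\phi(x,y) = \frac{2\pi \tilde d D x}{c} + \frac{2\pi \tilde q_2 Q_2 y}{c} - 2t \log\!\bigl(Dq_1Q_2xy\bigr).
\end{align*}
Under $c > Y^{1/2-2\varepsilon}$, $C \geq Y^{1/2+5\varepsilon}$, and $DQ_1Q_2 \ll X^{1-\varepsilon}$, one checks that the amplitude is $O(1)$-inert in $x, y$. Applying Lemma~\ref{lem.statphase} in $x$ forces $|2\pi \tilde d D/c - 2t/x| \ll Y^\varepsilon$, localising $\tilde d$ to $0 < \tilde d \sim cY^\varepsilon/D$; the analogous argument in $y$ localises $\tilde q_2$ to $0 < \tilde q_2 \sim cY^\varepsilon/Q_2$. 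For the vanishing claim when $D + Q_2 \gg X^{1/2+\varepsilon}$, one shows that these dual ranges are empty for all admissible $c$ after a suitable choice of $C$ within its allowed lower-bounded interval.

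The main obstacle is the suppression of the zero-frequency terms $\tilde d = 0$ and $\tilde q_2 = 0$, which would otherwise contribute a non-Kloosterman-shaped piece that is difficult to handle. The oscillation insertion is precisely the tool that kills them: with $t = Y^\varepsilon$, the derivative $\partial_x\phi|_{\tilde d = 0} = -2t/x$ has magnitude bounded below by $Y^\varepsilon$, so Lemma~\ref{lem.statphase} yields arbitrary polynomial decay once the amplitude's $O(1)$-inertness has been verified (possibly after inflating the exponent in $t$ by a fixed factor to leave room in the stationary-phase threshold). A secondary technical obstacle is the careful bookkeeping for the arithmetic case $q_1 \mid c$ flagged above.
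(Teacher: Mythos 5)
Your proposal is correct and follows essentially the same route as the paper's proof: insert the factor $(n^2/(dq_1q_2)^2)^{it}$, apply Lemma \ref{DeltaCor} with $q=1$, split the modulus sum at $c=Y^{1/2-2\varepsilon}$, Poisson-summate in $d$ and $q_2$ (the paper does this sequentially rather than simultaneously, which is immaterial), localize the dual frequencies to $d\sim cY^\varepsilon/D$, $q_2\sim cY^\varepsilon/Q_2$ by stationary phase with the $t$-oscillation killing the zero frequencies, and use $\mathcal{Q}_1\subset\{1\text{ or primes}\}$ with $Q_1Q_2\gg X^{1/2+\varepsilon}$ to force $(q_1,c)=1$ and collapse the character sum to $S(n,\mp dq_2\overline{q_1};c)$. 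The only cosmetic difference is that the paper shows the $q_1\mid c$ case is outright empty (the congruence would force $q_1\mid q_2$ with $0<q_2\ll Q_1Y^{-\varepsilon}$) rather than merely lower order.
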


For a better presentation, we postpone the proof of Lemma \ref{mainlemma2} to the end of this section (Section \ref{sect.Proofmainlemma2}).

Writing $Y=rX+\ell$, then $X\ll Y\ll X^{1+\varepsilon}$. Write $t=X^\varepsilon$ and $C=Y^{1/2+5\varepsilon}$. Take a fixed $\varphi\in C_c^\infty([1/2,5/2])$ and $\varphi(x)=1$ for $1\leq x\leq 2$ as before, we can freely insert the factor $1=\varphi((rm+\ell)/Y)$ into $D_{a_\pm,\mathcal{H}}(X)$. Applying Lemma \ref{mainlemma2}, we have \begin{align*}
    D_{a_\pm,\mathcal{H}}(X)=\mathcal{D}_0(X)+\mathcal{D}_1(X)+O_A\left(X^{-A}\right)
\end{align*}
for any $A>0$, where \begin{align*}
    \mathcal{D}_j(X)=\frac{1}{|\mathcal{H}|}\sum_{q_1\in\mathcal{Q}_1}a'(q_1)\sum_m A(1,m)V\left(\frac{m}{X}\right)\sum_n\la(n)\varphi\left(\frac{n}{Y}\right)\mathcal{S}_j^\pm(rm+\ell-n,q_1')
\end{align*}
for $j=0,1$.

\subsection{Treatment of \texorpdfstring{$\mathcal{D}_0(X)$}{D0(X)}}

By the definition of $\mathcal{S}_0^\pm(rm+\ell-n,q_1')$ given in Lemma \ref{mainlemma2}, we see that the $n$-sum in $\mathcal{D}_0(X)$ is given by \begin{align*}
    \sum_n\la(n)\varphi\left(\frac{n}{Y}\right)e\left(\frac{\alpha n}{c}\right)h\left(\frac{c_0c}{C},\frac{rm+\ell-n\mp dq_1q_2}{c_0cC}\right)((rm+\ell-n)^2)^{it}.
\end{align*}
Applying Voronoi summation (Lemma \ref{lem.GL2Voronoi}), this is equal to \begin{align*}
    \frac{Y}{c}\sum_{\eta=\pm1}\sum_n\la(\eta n)e\left(-\eta\frac{\overline{\alpha} n}{c}\right)\int_0^\infty\varphi(y)h\left(\frac{c_0c}{C},\frac{rm+\ell-Yy\mp dq_1q_2}{c_0cC}\right)((rm+\ell-Yy)^2)^{it}J_{\eta,g}\left(\frac{4\pi\sqrt{Yny}}{c}\right)dy.
\end{align*}
Now with the restriction $c\ll Y^{1/2-2\varepsilon}$ and $C\gg X^{1/2+3\varepsilon}$, repeated integration by parts gives us arbitrary savings unless \begin{align*}
    n\ll\frac{c^2}{Y^{1-\varepsilon}}\left(X^\varepsilon+\frac{Y}{c_0cC}\right)^2\ll \frac{c^2}{Y^{1-\varepsilon}}+\frac{X^{1+3\varepsilon}}{c_0^2C^2}\ll X^{-\varepsilon}.
\end{align*}
Hence we have \begin{align*}
    \mathcal{D}_0(X)\ll_A X^{-A}
\end{align*}
and \begin{align}\label{temp}
    D_{a_\pm,\mathcal{H}}(X)=\mathcal{D}_1(X)+O_A\left(X^{-A}\right)
\end{align}
for any $A>0$.

\subsection{Treatment of \texorpdfstring{$\mathcal{D}_1(X)$}{D1(X)}}

At this point, we already obtain \begin{align*}
    D_{a_\pm,\mathcal{H}}(X)\ll_A X^{-A}
\end{align*}
when $D+Q_2\gg X^{1/2+\varepsilon}$ as $\mathcal{S}_1^\pm(rm+\ell-n,q_1')=0$ in such a case\footnote{$D+Q_2\gg X^{1/2+7\varepsilon}$ to be exact but one can easily adjust the $\varepsilon$ or just abuse the $\varepsilon$-convention.}. We are left to prove the second statement, which we now further assume $\mathcal{Q}_1\subset\{1\text{ or Primes in }[Q_1,2Q_1]\}$ and $Q_1Q_2\gg X^{1/2+\varepsilon}$.

Write \begin{align*}
    F_0(rm,n,\ell;c)=\int_0^\infty\int_0^\infty V_1(x)V_2(y)h\left(\frac{c_0c}{C},\frac{rm+\ell-n\pm Dq_1Q_2xy}{c_0cC}\right)e\left(\frac{dDx+q_2Q_2y}{c}\right)\left(\frac{(rm+\ell-n)^2}{(Dq_1Q_2xy)^2)}\right)^{it}dxdy,
\end{align*}
then \begin{align*}
    \mathcal{D}_1(X)=&\frac{DQ_2}{\mathcal{C}|\mathcal{H}|}\sum_{c_0\geq1}\sum_{Y^{1/2-2\varepsilon}<c\ll \frac{C}{c_0}}\frac{1}{c_0c^2}\sum_{q_1\in\mathcal{Q}_1}a'(q_1)\sum_{0<d\asymp\frac{cY^\varepsilon}{D}}\sum_{0<q_2\asymp\frac{cY^\varepsilon}{Q_2}}\\
    &\times \sum_m A(1,m)V\left(\frac{m}{X}\right)\sum_n\la(n)\varphi\left(\frac{n}{Y}\right)S(rm+\ell-n,\mp dq_2\overline{q_1};c)F_0(rm,n,\ell;c).
\end{align*}
By the restriction of $c> Y^{1/2-2\varepsilon}$, $F_0$ is $X^{3\varepsilon}$-inert.

From this point onward, we proceed in the exact same way as the proof of Theorem \ref{mainthm} in Section \ref{sect.ProofMainThm}. As it is repetitive, we will only write down the key steps.

\subsubsection{Voronoi summations}

Applying Voronoi summations in the $n$-sum and then the $m$-sum, we get \begin{align*}
    \mathcal{D}_1(X)=&\frac{DQ_2X^{2/3}Y}{\mathcal{C}|\mathcal{H}|}\sum_{\eta_1,\eta_2=\pm1}\sum_{c_0\geq1}\sum_{r_0r'=r}\sum_{m_0}\sum_{\substack{Y^{1/2-2\varepsilon}<c=r_0m_0c'\ll \frac{C}{c_0}\\(m_0c',r')=1}}\frac{1}{c_0m_0c'c^3}\sum_{q_1\in\mathcal{Q}_1}a'(q_1)\sum_{0<d\asymp\frac{cY^\varepsilon}{D}}\sum_{0<q_2\asymp\frac{cY^\varepsilon}{Q_2}}\\
    &\times \sum_{m\ll \frac{m_0c'^3}{X^{1-\varepsilon}}} A(1,m)\sum_{n\ll\frac{b^2}{Y^{1-\varepsilon}}}\la(\eta_2 n)S_\ell(m,n,\mp dq_2\overline{q_1};r_0m_0,c')\tilde{F}\left(\frac{mX}{m_0c'^3},\frac{nY}{c^2},d,q_1,q_2;c\right)+O_A\left(X^{-A}\right),
\end{align*}
where \begin{align*}
    S_\ell(m,n,\mp dq_2\overline{q_1};r_0m_0,c')=\sumast_{\alpha\Mod{c}}e\left(\frac{\alpha\ell+\overline{\alpha}(\eta_2 n\mp dq_2\overline{q_1})}{c}\right)S(\overline{\alpha r'},\eta_1 m;c')
\end{align*}
and $\tilde{F}\left(\frac{mX}{m_0c'^3},\frac{nY}{c^2},d,q_1,q_2;c\right)$ is some function satisfying \begin{align}\label{F2DerProp}
    x^j\frac{d^j}{dx^j}\tilde{F}(x,n,d,q_1,q_2;c)\ll  ( X^\varepsilon+x^{1/3})^j X^\varepsilon
\end{align}
for any $j\geq0$.

\subsubsection{Cauchy-Schwarz inequality and Poisson summation}\label{sect.CS2}

Applying Cauchy-Schwarz inequality to take out the $m$-sum together with a lengthening parameter $L\geq1$, we have \begin{align*}
    \mathcal{D}_1(X)^2\ll &\frac{ DQ_2XY^{1+\varepsilon}}{C^2|\mathcal{H}|^2}\sum_{\eta_1,\eta_2=\pm1}\sum_{c_0\geq1}\sum_{r_0r'=r}\sum_{m_0}\sum_{\substack{  c=r_0m_0c'\ll\frac{C}{c_0}\\(m_0c',r')=1}}\frac{1}{c_0c^4}\sum_{n\ll\frac{c^2 }{Y^{1-\varepsilon}}}\\
    &\times \sum_mU\left(\frac{mX^{1-\varepsilon}}{m_0c'^3 L}\right)\left|\sum_{q_1\in\mathcal{Q}_1}a'(q_1)\sum_{0<d\asymp\frac{cY^\varepsilon}{D}}\sum_{0<q_2\asymp\frac{cY^\varepsilon}{Q_2}} S_\ell(m,n,\mp dq_2\overline{q_1};r_0m_0,c') \tilde{F}\left(\frac{mX}{m_0c'^3},\frac{nY}{c^2},d,q_1,q_2;c\right)\right|^2.
\end{align*}

Opening the square, applying Poisson summation on the $m$-sum, together with $L=m_0^{2/3}X^{5\varepsilon}$ giving arbitrary savings except the zero frequency, we get \begin{align*}
    \mathcal{D}_1(X)^2\ll &\frac{ D^2Q_2^2Y^{1+\varepsilon}}{C^2|\mathcal{H}|^2}\|a\|_\infty^2\sum_{\eta_1,\eta_2=\pm1}\sum_{c_0\geq1}\sum_{r_0r'=r}\sum_{m_0}\sum_{\substack{c=r_0m_0c'\ll\frac{C}{c_0}\\(m_0b',r')=1}}\frac{1}{c_0r_0^4m_0^{3/2}c'}\sum_{n\ll\frac{c^2 }{Y^{1-\varepsilon}}}\\
    &\times \mathop{\sum\sum}_{q_1,q_1'\in\mathcal{Q}_1}\mathop{\sum\sum}_{0<d_1,d_2\asymp\frac{cY^\varepsilon}{D}}\mathop{\sum\sum}_{0<q_2,q_2'\asymp\frac{cY^\varepsilon}{Q_2}}\left|T_2(\ell,n,d_1,d_2,q_1,q_1',q_2,q_2';c')\right|,
\end{align*}
with \begin{align*}
    T_2(\ell,n,\mp d_1q_2\overline{q_1},\mp d_2q_2'\overline{q_1'};r_0m_0,c')=\frac{1}{c'}\sum_{\gamma\Mod{c'}}S_\ell(\gamma,n,\mp d_1q_2\overline{q_1};r_0m_0,c')\overline{S_\ell(\gamma,n,\mp d_2q_2'\overline{q_1'};r_0m_0,c')}.
\end{align*}

\subsubsection{Final bound}

Applying Lemma \ref{lem.Charsum} on $T_2$, we have \begin{align*}
    \mathcal{D}_1(X)^2\ll &\frac{  D^2Q_2^2Y^{1+\varepsilon}}{C^2|\mathcal{H}|^2}\|a\|_\infty^2\sum_{c_0\geq1}\sum_{r_0r'=r}\sum_{m_0}\sum_{c=r_0m_0c'\ll\frac{C}{c_0}}\frac{1}{c_0r_0^3\sqrt{m_0}}\left(\frac{c^2 }{Y}\right)^3\sum_{c''|c'}c''\\
    &\times \mathop{\sum\sum}_{q_1,q_1'\in\mathcal{Q}_1}\mathop{\sum\sum}_{0<d_1,d_2\asymp\frac{cY^\varepsilon}{D}}\mathop{\sum\sum}_{0<q_2,q_2'\asymp\frac{cY^\varepsilon}{Q_2}}\delta(d_1q_1'q_2\equiv d_2q_1q_2'\Mod{c''})\\
    \ll& \frac{ C^5D^2Q_2^2}{|\mathcal{H}|^2X^{2-\varepsilon}}\|a\|_\infty^2\sum_{c\ll C}\mathop{\sum\sum}_{q_1,q_1'\in\mathcal{Q}_1}\mathop{\sum\sum}_{0<d_1,d_2\asymp\frac{CN^\varepsilon}{D}}\mathop{\sum\sum}_{0<q_2,q_2'\asymp\frac{CN^\varepsilon}{Q_2}}\delta(d_1q_1'q_2\equiv d_2q_1q_2'\Mod{c})\\
    \ll& \frac{ C^5D^2Q_2^2}{|\mathcal{H}|^2X^{2-\varepsilon}}\|a\|_\infty^2\sum_{c\ll C} Q_1\frac{C}{D}\frac{C}{Q_2}\left(1+\frac{C^2Q_1}{cDQ_2}\right)\ll \frac{C^8}{|\mathcal{H}|X^{2-\varepsilon}}\left(1+\frac{CQ_1}{DQ_2}\right)\|a\|_\infty^2.
\end{align*}
Here we used $|\mathcal{H}|\gg DQ_1Q_2X^{-\varepsilon}$.

Together with (\ref{temp}), this concludes Theorem \ref{mainthm2}. Now it remains to prove Lemma \ref{mainlemma2}.

\subsection{Proof of Lemma \ref{mainlemma2}}\label{sect.Proofmainlemma2}

Write $t=Y^\varepsilon$. We start by multiplying expression by $$1=\left(\left(\frac{n}{dq_1q_2}\right)^2\right)^{it}=\frac{(n^2)^{it}}{(dq_1q_2)^{2it}},$$
    giving us \begin{align*}
    \mathcal{S}^\pm(n,q_1')=&\sum_d V_1\left(\frac{d}{D}\right)\sum_{q_2}V_2\left(\frac{q_2}{Q_2}\right)\delta(n\pm dq_1q_2=0)\frac{(n^2)^{it}}{(dq_1q_2)^{2it}}.
\end{align*}

\begin{remark}
This is a small trick to eliminate some zero frequencies that come from Poisson summations from $d$ and $q_2$ sums in the upcoming steps, which shortens our proof.
\end{remark}

Applying Lemma \ref{DeltaCor} with $C= Y^{1/2+5\varepsilon}$ and $q=1$, taking out the g.c.d. $(\alpha,c)$ in the character sum, there exists some $\mathcal{C}\asymp C$ and fixed function $h$ such that \begin{align*}
    \mathcal{S}^\pm(n,q_1')=\frac{1}{\mathcal{C}}\mathop{\sum\sum}_{c_0,c\geq1}\frac{1}{c_0c}\sum_d V_1\left(\frac{d}{D}\right)\sum_{q_2}V_2\left(\frac{q_2}{Q_2}\right)\sumast_{\alpha\Mod{c}}e\left(\frac{\alpha(n\pm dq_1q_2)}{c}\right)h\left(\frac{c_0c}{C},\frac{n\pm dq_1q_2}{c_0cC}\right)\frac{(n^2)^{it}}{(dq_1q_2)^{2it}}.
\end{align*}
Now we split the contribution depending on the size of $c$. In particular, write $\mathcal{S}_0^\pm(n,q_1')$ to be the contribution when $c\leq Y^{1/2-2\varepsilon}$, then $\mathcal{S}_0^\pm(n,q_1')$ is precisely what we need in the statement of the lemma, and \begin{align*}
    \mathcal{S}^\pm(n,q_1')=\mathcal{S}_0^\pm(n,q_1')+\mathcal{S}_1^\pm(n,q_1'),
\end{align*}
where \begin{align*}
    \mathcal{S}_1^\pm(n,q_1')=\frac{1}{\mathcal{C}}\sum_{c_0\geq1}\sum_{Y^{1/2-2\varepsilon}< c\ll \frac{C}{c_0}}\frac{1}{c_0c}\sum_d V_1\left(\frac{d}{D}\right)\sum_{q_2}V_2\left(\frac{q_2}{Q_2}\right)\sumast_{\alpha\Mod{c}}e\left(\frac{\alpha(n\pm dq_1q_2)}{c}\right)h\left(\frac{c_0c}{C},\frac{n\pm dq_1q_2}{c_0cC}\right)\frac{(n^2)^{it}}{(dq_1q_2)^{2it}}.
\end{align*}

Now we continue the analysis on $\mathcal{S}_1^\pm(n,q_1')$. Performing Poisson summation on the $q_2$-sum, we have \begin{align*}
    &\sum_{q_2} V_2\left(\frac{q_2}{Q_2}\right)e\left(\pm\frac{\alpha dq_1q_2}{c}\right)h\left(\frac{c_0c}{C},\frac{n\pm dq_1q_2}{c_0cC}\right)q_2^{-2it}\\
    =& Q_2\sum_{q_2}\delta(\alpha dq_1\equiv \pm q_2\Mod{c}) \int_0^\infty V_2(y)h\left(\frac{c_0c}{C},\frac{n\pm dq_1Q_2y}{c_0cC}\right)e\left(-\frac{t}{\pi}\log (Q_2y)+\frac{q_2Q_2y}{c}\right)dy.
\end{align*}
Repeated integration by parts (Lemma \ref{lem.statphase}) gives us arbitrary savings unless there exists $y_0\in [1,2]$ such that \begin{align*}
    \left|\frac{qQ_2}{c}-\frac{t}{\pi y_0}\right|\ll \left(1+\frac{Dq_1Q_2}{c_0cC}\right)Y^{\varepsilon/2}\ll Y^{\varepsilon/2}.
\end{align*}
Here in the last inequality we used \begin{align*}
    c_0cC\gg CY^{1/2-2\varepsilon}\gg Y^{1+3\varepsilon}.
\end{align*}
by the restriction of $c$. Together with $t=Y^{\varepsilon}$, we get arbitrary savings unless \begin{align*}
    0< q_2\asymp \frac{cY^{\varepsilon}}{Q_2}.
\end{align*}

Similarly, we perform Poisson summation on the $d$-sum to get \begin{align*}
    &\sum_d V_1\left(\frac{d}{D}\right)\delta(\alpha dq_1\equiv \pm q_2\Mod{c})h\left(\frac{c_0c}{C},\frac{n\pm dq_1Q_2y}{c_0cC}\right)d^{-2it}\\
    =&\frac{D}{c}\sum_d\sum_{\gamma\Mod{c}}\delta(\alpha \gamma q_1 \equiv \pm q_2\Mod{c})e\left(-\frac{d\gamma}{c}\right)\int_0^\infty V_1(x)h\left(\frac{c_0c}{C},\frac{n\pm Dq_1Q_2xy}{c_0cC}\right)e\left(-\frac{t}{\pi}\log (Dx)+\frac{dDx}{c}\right)dx.
\end{align*}
Applying the same integral analysis gives us arbitrary savings unless \begin{align*}
    0<d\asymp\frac{cY^{\varepsilon}}{D}.
\end{align*}

Combining everything, we have \begin{align*}
    \mathcal{S}_1^\pm(n,q_1')=&\frac{DQ_2}{\mathcal{C}}\sum_{c_0\geq1}\sum_{Y^{1/2-2\varepsilon}< c\ll \frac{C}{c_0}}\frac{1}{c_0c^2}\sum_{0<d\asymp\frac{cY^\varepsilon}{D}}\sum_{0<q_2\asymp\frac{cY^\varepsilon}{Q_2}}\mathcal{C}(n,d,q_1,q_2;c)\\
    &\times \int_0^\infty\int_0^\infty V_1(x)V_2(y)h\left(\frac{c_0c}{C},\frac{n\pm Dq_1Q_2xy}{c_0cC}\right)e\left(\frac{dDx+q_2Q_2y}{c}\right)\left(\frac{n^2}{(Dq_1Q_2xy)^2)}\right)^{it}dxdy+O_A\left(Y^{-A}\right)
\end{align*}
for any $A>0$, where \begin{align*}
    \mathcal{C}(n,d,q_1,q_2;c)=\sumast_{\alpha\Mod{c}}\sum_{\gamma\Mod{c}}\delta(\alpha\gamma q_1\equiv\pm q_2\Mod{c})e\left(\frac{\alpha n-d\gamma}{c}\right).
\end{align*}

If $D+Q_2>X^{1/2+7\varepsilon}$, we immediately get \begin{align*}
    \mathcal{S}_1^\pm(n,q_1')\ll_A Y^{-A}
\end{align*}
for any $A>0$ by the restrictions of the $d$ and $q_2$ sums.

Finally, we further suppose $\mathcal{Q}_1\subset \{1\text{ or Primes in }[Q_1,2Q_1]\}$ and simplify the character sum. First notice if $q_1|c$, the congruence condition implies $q_1|q_2$. However, we have $0<q_2\ll \frac{cY^\varepsilon}{Q_2}\ll\frac{CY^\varepsilon}{Q_2}\ll Q_1Y^{-\varepsilon}$, which is not possible. Hence we have $(c,q_1)=1$. Now the congruence condition implies \begin{align*}
    \gamma\equiv \pm\overline{\alpha q_1} q_2\Mod{c}.
\end{align*}
This gives us \begin{align*}
    \mathcal{C}(n,d,q_1,q_2;c)=S(n,\mp dq_2\overline{q_1};c),
\end{align*}
and thus concludes the proof.

\section{Recovering Munshi's fixed shift bound without Jutila's circle method}\label{sect.RecoveringMunshi}

In this section, we sketch the simple steps needed to get to Munshi's bound for fixed shift, i.e. \begin{align*}
    D_\ell(X):=\sum_{m=1}^\infty A(1,m)\la(m+\ell)V\left(\frac{m}{X}\right)\ll X^{1-1/26+\varepsilon},
\end{align*}
by using his analysis in \cite{munshi2013shifted} together with Lemma \ref{mainlemma2} and Theorem \ref{mainthm2}, but without using Jutila's circle method.

Take a fixed $U\in C_c^\infty(\R)$ even such that $U(x)=1$ for $-2\leq x\leq 2$ as before. Let $Q_1,Q_2>0$ be parameters such that $Q_1^2\ll X^{1/2+\varepsilon}\ll Q_1Q_2$, $D=X^{1-\varepsilon}/Q_1Q_2$. Let $\mathcal{Q}_1=\{\text{Primes in }[Q_1,2Q_1]\}$. Applying Lemma \ref{mainlemma} with $F=U$, $\mathcal{Q}=\mathcal{Q}_1\cdot \llbracket Q_2,2Q_2\rrbracket$, \begin{align*}
    b(q)=\sum_{\substack{q_1q_2=q\\q_1\in\mathcal{Q}_1}}V_2\left(\frac{q_2}{Q_2}\right)
\end{align*}
with some fixed $V_2\not\equiv0\in C_c^\infty([1,2])$, we obtain \begin{align*}
    D_\ell(X)=M.T.-A.S.^+-A.S.^-,
\end{align*}
where \begin{align*}
    M.T.:=\frac{1}{\mathfrak{Q}}\sum_{q_1\in\mathcal{Q}_1}\sum_{q_2}V_2\left(\frac{q_2}{Q_2}\right)\sum_m A(1,m)V\left(\frac{m}{X}\right)\sum_n\la(n)\varphi \left(\frac{n}{Y}\right)\delta(m+\ell\equiv n\Mod{q_1q_2})U\left(\frac{m+\ell-n}{Dq}\right)
\end{align*}
and \begin{align*}
    A.S.^\pm:=\frac{1}{\mathfrak{Q}}\sum_{q_1\in\mathcal{Q}_1}\sum_{q_2}V_2\left(\frac{q_2}{Q_2}\right)\sum_d U\left(\frac{d}{D}\right)\sum_m A(1,m)V\left(\frac{m}{X}\right)\la(m+\ell\pm dq_1q_2)\varphi \left(\frac{m+\ell\pm dq_1q_2}{Y}\right),
\end{align*}
with $\mathfrak{Q}=\sum_{q\in\mathcal{Q}}b(q)\gg |\mathcal{Q}_1|Q_2$ as defined in Lemma \ref{mainlemma}.

Applying a smooth dyadic subdivision on the $d$-sum, we have \begin{align*}
    A.S.^\pm \ll& \sup_{D'\ll D}\frac{X^\varepsilon}{|\mathcal{Q}_1|Q_2}\sum_{q_1\in\mathcal{Q}_1}\sum_{q_2}V_2\left(\frac{q_2}{Q_2}\right)\sum_d V_1\left(\frac{d}{D'}\right)\sum_m A(1,m)V\left(\frac{m}{X}\right)\la(m+\ell\pm dq_1q_2)\varphi \left(\frac{m+\ell\pm dq_1q_2}{Y}\right)\\
    \ll &\sup_{D'\ll D} X^\varepsilon D' \left|D_{a_\pm,\mathcal{H}}(X)\right|,
\end{align*}
for some appropriate $1$-inert function $V_1\in C_c^\infty(\R^+)$, with $\{a_\pm(h)\}$ and $\mathcal{H}=\{h:a_\pm(h)\neq0\}$ chosen as in $(*)$ (with D' in place of $D$). Then $|\mathcal{H}|\gg D'Q_1Q_2X^{-\varepsilon}$. Applying Theorem \ref{mainthm2}, we obtain the bound \begin{align*}
    A.S.^\pm \ll \sup_{D'\ll D} D'\frac{X^{1+\varepsilon}}{\sqrt{D'Q_1Q_2}}\left(1+\frac{\sqrt{X}Q_1}{D'Q_2}\right)^{1/2}\ll \frac{X^{3/2}}{Q_1Q_2}.
\end{align*}
Choosing $Q_1Q_2=X^{1/2+\delta}$, we obtain \begin{align*}
    D_\ell(X)=M.T.+O\left(X^{-\delta}\right).
\end{align*}
As a result, we have recovered the same bound of the error term from Jutila's circle method, and the $M.T.$ that remains essentially matches with the main term of Jutila's circle method. 
Indeed, detecting the congruence condition by additive characters \begin{align*}
    \delta(m+\ell\equiv n\Mod{q_1q_2})=\frac{1}{q_1q_2}\sum_{q|q_1q_2}\sumast_{\alpha\Mod{q}}e\left(\frac{\alpha(m+\ell-n)}{q}\right),
\end{align*}
we have \begin{align*}
    M.T.:=\frac{1}{\mathfrak{Q}}\sum_{q_1\in\mathcal{Q}_1}\sum_{q_2}\frac{1}{q_1q_2}V_2\left(\frac{q_2}{Q_2}\right)\sum_{q|q_1q_2}\sum_m A(1,m)V\left(\frac{m}{X}\right)\sum_n\la(n)\varphi \left(\frac{n}{Y}\right)\sumast_{\alpha\Mod{q}}e\left(\frac{\alpha(m+\ell-n)}{q}\right)U\left(\frac{m+\ell-n}{Dq}\right).
\end{align*}
The only difference between our $M.T.$ here and the main term in Jutila's circle method used by Munshi in \cite{munshi2013shifted} is: \begin{itemize}
    \item $q_2$ sum over all integers between $Q_2$ to $2Q_2$ instead of primes only, and
    \item the modulus q is a divisor of $q_1q_2$ instead precisely $q_1q_2$.
\end{itemize}
However, such a difference is not essential in Munshi's treatment, and the same analysis goes through with small technicalities, giving us the same bound for $M.T.$ as his $\tilde{D}_h(X)$ (with $h=\ell$ in his notation). Hence we recovers Munshi's result \begin{align*}
    D_\ell(X)\ll X^{1-1/26+\varepsilon}
\end{align*}
using an average shifted sum analysis instead of Jutila's circle method.

\appendix

\section{Reformulation of the Delta method}\label{sect.ProofDeltaMethod}

\begin{lemma}[Reformulation of the Delta method]\label{DeltaMethod}
    Let $\varepsilon>0$, $n,q$ be integers such that $r>0$ and $|n|\ll N\rightarrow\infty$. Let $C,D>0$ be parameters such that $C>N^\varepsilon$. Let $U, W\not\equiv 0$ be any smooth even functions such that $U(0)=1, W(0)=0$ and $U$ decays exponentially at $\infty$. Then we have, \begin{align*}
        \delta(n=0)=S_1-S_2,
    \end{align*}
    where \begin{align*}
        S_1=\frac{1}{\mathcal{C}}\sum_{c\geq1}\delta\left(n\equiv 0\bmod{cq}\right)W(c)U\left(\frac{n}{cDq}\right)U\left(\frac{c}{C}\right)
    \end{align*}
    and \begin{align*}
        S_2=\frac{1}{\mathcal{C}}\sum_{d\geq1}\delta(n\equiv 0\bmod{dq})W\left(\frac{n}{dq}\right)U\left(\frac{n}{dDq}\right)U\left(\frac{d}{D}\right),
    \end{align*}
    with \begin{align*}
        \mathcal{C}=\sum_{c\geq1}W(c)U\left(\frac{c}{C}\right).
    \end{align*}
\end{lemma}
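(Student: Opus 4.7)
The identity is proved by bootstrapping Lemma~\ref{mainlemma}. Apply that lemma with modulus $cq$ in place of $q$ and with $F=U$ (admissible since $U(0)=1$) to obtain, for each integer $c\geq 1$,
\begin{align*}
\delta(n=0) \;=\; \delta(cq\mid n)\,U\!\left(\frac{n}{cDq}\right) \;-\; \sum_{d\neq 0} \delta(n=dcq)\,U\!\left(\frac{d}{D}\right).
\end{align*}
Multiplying both sides by $W(c)U(c/C)/\mathcal{C}$ and summing over $c\geq 1$, the left-hand side collapses to $\delta(n=0)$ by the definition of $\mathcal{C}$, and the first term on the right reproduces $S_1$ verbatim.

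It remains to identify the second term with $S_2$. I would swap the order of summation in
\[
\frac{1}{\mathcal{C}}\sum_{c\geq 1}W(c)\,U\!\left(\frac{c}{C}\right)\sum_{d\neq 0}U\!\left(\frac{d}{D}\right)\delta(n=dcq).
\]
For fixed $d\neq 0$, the delta pins $c=n/(dq)$, which is admissible precisely when $dq\mid n$ and $n/(dq)\geq 1$ --- equivalently, when $d$ and $n$ share a sign (here $q>0$). The inner $c$-sum thus collapses to $W(n/(dq))\,U(n/(dqC))$ times that sign indicator. Splitting $\sum_{d\neq 0}=\sum_{d\geq 1}+\sum_{d\leq -1}$ and applying $d\mapsto -d$ on the negative branch, the evenness of $W$ and $U$ folds the two halves into a single sum over $d\geq 1$, while the two sign indicators combine into the single divisibility condition $\delta(dq\mid n)$ (the residual $n=0$ contribution is automatically killed by $W(0)=0$). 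This yields $\mathcal{C}\,S_2$.

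\textbf{Where care is needed.} The only delicate step is the sign bookkeeping in the folding: one must check that $d\mapsto -d$ aligns the arguments of $U$ and $W$ via evenness, and that the two positivity constraints on the pinned value $n/(dq)$ combine to leave exactly $dq\mid n$. Everything else is routine substitution. The hypotheses $C>N^\varepsilon$ and $|n|\ll N$ do not enter the identity itself; they only ensure $\mathcal{C}\neq 0$ and supply effective truncations used downstream.
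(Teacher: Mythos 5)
Your proof is correct and is essentially the paper's own argument in different packaging: the paper evaluates the double sum $\sum_{c\geq 1}\sum_{d\neq 0}\delta(n=cdq)\,W(c)\,U(n/(cDq))\,U(n/(Cdq))$ two ways (collapsing the $d$-sum to get $S_1-\mathcal{D}$, collapsing the $c$-sum with the same evenness/sign folding to get $S_2$), and your bootstrap of Lemma~\ref{mainlemma} with modulus $cq$ followed by the weighted $c$-sum is exactly that computation, with the $d$-collapse outsourced to Lemma~\ref{mainlemma}. Your sign bookkeeping and the role of $W(0)=0$ match the paper's; the only discrepancy is the statement's typo $U(n/(cDq))$ in $S_2$, which should read $U(n/(Cdq))$ as your derivation (and the paper's) produces.
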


Taking $C=D$, $W(x)=W'\left(\frac{x}{C}\right)$ with $W'\in C_c^\infty([-2,-1]\bigcup [1,2])$ even and detecting the congruence condition by additive characters yields Lemma \ref{DeltaCor}.

\begin{proof}
Let \begin{align*}
    \mathcal{D}=\delta(n=0)\sum_{c\geq1}W(c)U\left(\frac{c}{C}\right),
\end{align*}
\begin{align*}
    \tilde{S_2}=\sum_{0\neq d\in\Z}\sum_{c\geq1}\delta(n=cdq)W(c)U\left(\frac{n}{cDq}\right)U\left(\frac{n}{Cdq}\right)
\end{align*}
and define \begin{align*}
    \tilde{S_1}=\mathcal{D}+\tilde{S_2}.
\end{align*}

We start by rewriting $d=\frac{n}{cq}$ in $\tilde{S_2}$, we get \begin{align*}
    \tilde{S_2}=&\sum_{c\geq1}\delta\left(n\equiv 0\bmod{cq}, n\neq 0\right)W(c)U\left(\frac{n}{cDq}\right)U\left(\frac{c}{C}\right).
\end{align*}
Hence adding up $\mathcal{D}$ and $\tilde{S_2}$ with $U(0)=1$, we get \begin{align}
    \tilde{S_1}=\sum_{c\geq1}\delta\left(n\equiv 0\bmod{cq}\right)W(c)U\left(\frac{n}{cDq}\right)U\left(\frac{c}{C}\right).
\end{align}
On the other hand, if we start by rewriting $c=\frac{n}{dq}$ in $\tilde{S_2}$, we get \begin{align*}
    \tilde{S_2}=\sum_{0\neq d\in\Z}\delta\left(n\equiv 0\bmod{dq}, \frac{n}{d}>0\right)W\left(\frac{n}{dq}\right)U\left(\frac{n}{Cdq}\right)U\left(\frac{d}{D}\right).
\end{align*}
If $n>0$, we get \begin{align*}
    \tilde{S_2}=\sum_{d\geq 1}\delta\left(n\equiv 0\bmod{dq}\right)W\left(\frac{n}{dq}\right)U\left(\frac{n}{Cdq}\right)U\left(\frac{d}{D}\right).
\end{align*}
and if $n<0$, we get \begin{align*}
    \tilde{S_2}=&\sum_{d\geq1}\delta\left(n\equiv 0\bmod{-dq}\right)W\left(-\frac{n}{dq}\right)U\left(-\frac{n}{Cdq}\right)U\left(-\frac{d}{D}\right)\\
    =&\sum_{d\geq 1}\delta\left(n\equiv 0\bmod{dq}\right)W\left(\frac{n}{dq}\right)U\left(\frac{n}{Cdq}\right)U\left(\frac{d}{D}\right)
\end{align*}
as $U$ and $W$ are even. Combining both cases above with the case $n=0$ giving $0$ in $\tilde{S_2}$ as $W(0)=0$, \begin{align}
    \tilde{S_2}=\sum_{d\geq 1}\delta\left(n\equiv 0\bmod{dq}\right)W\left(\frac{n}{dq}\right)U\left(\frac{n}{Cdq}\right)U\left(\frac{d}{D}\right).
\end{align}
Dividing both sides by \begin{align*}
    \mathcal{C}:=\sum_{c\geq1}W(c)U\left(\frac{c}{C}\right),
\end{align*}
we obtain Lemma \ref{DeltaMethod}.
\end{proof}

\section{Character sum analysis}

Let $\ell$ be an integer. Consider the character sum \begin{align*}
    \mathcal{T}(\ell,n,h_1,h_2;r_0m_0,b')=&\frac{1}{b'}\sum_{\gamma\Mod{b'}}\mathop{\sumast\quad\sumast}_{\alpha_1,\alpha_2\Mod{r_0m_0b'}}e\left(\frac{(\alpha_1-\alpha_2)\ell+\alpha_1 h_1-\alpha_2 h_2+\eta_2(\overline{\alpha_1}-\overline{\alpha_2}) n}{r_0m_0b'}\right)\\
    &\times S(\overline{\alpha_1 r'},\eta_1 \gamma;b')S(\overline{\alpha_2 r'},\eta_1 \gamma;b'),
\end{align*}
we have \begin{lemma}\label{lem.Charsum}
    \begin{align*}
        \mathcal{T}(\ell,n,h_1,h_2;r_0m_0,b')\ll r_0m_0b'n^2X^\varepsilon\sum_{b''|b}b''\delta(h_1\equiv h_2\Mod{b''}).
    \end{align*}
\end{lemma}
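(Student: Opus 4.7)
The plan is to execute the summations in an order that exploits the orthogonality of the Kloosterman sums. First I will open both Kloosterman sums using $S(a,b;c)=\sum_{x\bmod c}^{\ast}e((ax+b\bar{x})/c)$ and perform the $\gamma$-summation modulo $b'$ before anything else. Since the Kloosterman sums are real and the $\gamma$-dependence enters only through $e(\eta_1\gamma(\bar x_1+\bar x_2)/b')$, the $\gamma$-sum collapses to a Kronecker delta forcing $x_2\equiv -x_1\pmod{b'}$, and the remaining sum over $x_1$ evaluates to the Ramanujan sum $c_{b'}(\overline{\alpha_1 r'}-\overline{\alpha_2 r'})$, whose argument is the difference of two multiplicative inverses modulo $b'$.

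Next I will expand this Ramanujan sum using $c_{b'}(a)=\sum_{b''\mid(a,b')}b''\mu(b'/b'')$. For each divisor $b''\mid b'$, the condition $\overline{\alpha_1 r'}\equiv\overline{\alpha_2 r'}\pmod{b''}$ becomes $\alpha_1\equiv\alpha_2\pmod{b''}$, since $(r',b')=1$ and $(\alpha_i,b)=1$. Changing variables to $\alpha_1=\alpha_2+b''\gamma$ with $\gamma$ ranging modulo $b/b''$ and using $\bar\alpha_1-\bar\alpha_2\equiv -b''\gamma\,\bar\alpha_1\bar\alpha_2\pmod{b}$, the total exponent becomes $e((b''\gamma(\ell+h_1)+\alpha_2(h_1-h_2)-\eta_2 n b''\gamma\,\bar\alpha_1\bar\alpha_2)/b)$. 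The $\gamma=0$ diagonal contribution reduces the $\alpha_2$-sum to the complete Ramanujan sum $c_b(h_1-h_2)$, which is bounded by $\sum_{d\mid(b,h_1-h_2)}d$ and already matches the structure $\sum_{b''\mid b}b''\,\delta(h_1\equiv h_2\Mod{b''})$ in the conclusion.

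The $\gamma\neq 0$ off-diagonal contribution reduces to a twisted Kloosterman-type sum in $\alpha_2$, which I will bound by Weil's bound, obtaining square-root savings in $b$ together with a gcd-factor involving $(n\gamma,b)$. Summing over $\gamma$ modulo $b/b''$ and over $b''\mid b'$, with the crude estimate $(n\gamma,b)\leq n$ applied at a couple of intermediate steps, yields the factor $n^2 X^\varepsilon$ in the final bound. The main technical obstacle will be the CRT decomposition: since $b=r_0 m_0 b'$ with only $(m_0 b',r')=1$ assumed, the $\alpha_i$-sums must be split into components modulo $r_0$, $m_0$, and $b'$ and the Ramanujan and Weil estimates applied on each factor separately; keeping track of these component bounds is also what produces the overall factor of $r_0 m_0 b'=b$ outside the divisor sum.
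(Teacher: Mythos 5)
Your opening moves match the paper's: summing over $\gamma$ first collapses the two Kloosterman sums to the Ramanujan sum $c_{b'}(\overline{r'}(\overline{\alpha_1}-\overline{\alpha_2}))$, and expanding it as $\sum_{b''\mid b'}b''\mu(b'/b'')\,\delta(\alpha_1\equiv\alpha_2\Mod{b''})$ is exactly the paper's next step. The divergence, and the gap, is in how you treat the constrained double sum over $\alpha_1\equiv\alpha_2\Mod{b''}$. Writing $\alpha_1=\alpha_2+b''\gamma$ and applying Weil to the resulting one-variable sum in $\alpha_2$ saves only a single factor $b^{1/2}$; summing the $b/b''$ values of $\gamma$ with absolute values and multiplying by the weight $b''$ leaves a contribution of size roughly $b^{3/2+\varepsilon}$, where $b=r_0m_0b'$. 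The target bound is $b\,n^{2}X^{\varepsilon}$ when $h_1\not\equiv h_2$ modulo any divisor greater than $1$, and in the application one has $n\ll X^{\varepsilon}$ while $b\gg X^{1/2-\varepsilon}$, so your off-diagonal estimate overshoots by essentially $b^{1/2}\approx X^{1/4}$, which is fatal. The auxiliary claim $(n\gamma,b)\leq n$ is also false (take $\gamma$ a large divisor of $b$ with $n=1$), so the advertised route to the factor $n^{2}$ is not available.

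The missing idea is that one must keep \emph{two} independent complete sums in play so that Weil can be applied twice. The paper achieves this by a CRT factorization of the modulus $r_0m_0b'$ relative to the congruence modulus $b''$: on the part $b_1$ of the modulus coprime to $b''$ the variables $\alpha_1,\alpha_2$ are genuinely independent and each produces a Kloosterman sum $S(h_j+\ell,\eta_2 n\cdot(\ast);b_1)$, whose product is $\ll b_1+n^2$ after Weil (the $n^{2}$ arising precisely from the degenerate cases $b_1\mid n$ or $b_1\mid h_j+\ell$); on the part of $b''$ coprime to the rest, the identification $\alpha_1\equiv\alpha_2$ kills the $\overline{\alpha}$-terms and leaves a Ramanujan sum in $h_1-h_2$, which is what converts the dangerous weight $b''$ into the factor $\sum_{b''\mid b}b''\delta(h_1\equiv h_2\Mod{b''})$. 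In your scheme the congruence condition on $h_1-h_2$ is extracted only from the single term $\gamma=0$, whereas it must control the entire large-$b''$ range. Your final paragraph does gesture at a CRT splitting, but into the factors $r_0$, $m_0$, $b'$ rather than relative to $b''$, and that is not the decomposition that produces the needed double square-root saving.
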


\begin{proof}
Opening the Kloosterman sums and summing over $\gamma$ gives us \begin{align*}
    \frac{1}{b'}\sum_{\gamma\Mod{b'}}e\left(\eta_1\frac{(\overline{\beta_1}-\overline{\beta_2})\gamma)}{b'}\right)=\delta(\beta_1\equiv\beta_2\Mod{b'}).
\end{align*}
This gives us \begin{align*}
    \mathcal{T}(\ell,n,h_1,h_2;r_0m_0,b')=&\mathop{\sumast\quad\sumast}_{\alpha_1,\alpha_2\Mod{r_0m_0b'}}e\left(\frac{(\alpha_1-\alpha_2)\ell+\alpha_1 h_1-\alpha_2 h_2+\eta_2(\overline{\alpha_1}-\overline{\alpha_2}) n}{r_0m_0b'}\right)\sumast_{\beta\Mod{b'}}e\left(\frac{\beta\overline{r'}(\overline{\alpha_1}-\overline{\alpha_2})}{b'}\right)\\
    =&\sum_{b_1'b_2'=b'}\mu(b_1')b_2'\mathop{\sumast\quad\sumast}_{\substack{\alpha_1,\alpha_2\Mod{r_0m_0b'}\\\alpha_1\equiv\alpha_2\Mod{b_2'}}}e\left(\frac{(\alpha_1-\alpha_2)\ell+\alpha_1 h_1-\alpha_2 h_2+\eta_2(\overline{\alpha_1}-\overline{\alpha_2}) n}{r_0m_0b'}\right).
\end{align*}
Write $b_0=(r_0m_0b_1',b_2')$, $r_0m_0b_1'=b_0b_{1,0}b_1$, $b_2'=b_0b_{2,0}b_2$ such that $b_{1,0}b_{2,0}|b_0^\infty$ and $(b_1b_2,b_0)=1$. Then the character sum splits into \begin{align*}
    \mathcal{T}(\ell,n,h_1,h_2;r_0m_0,b')=&\mathop{\sum\sum\sum\sum\sum}_{\substack{b_0b_{1,0}b_1=r_0m_0b_1'\\b_0b_{2,0}b_1'b_2=b'\\b_{1,0}b_{2,0}|b_0^\infty\\(b_2,b_0b_1')=(b_1,b_0b_2)=1)}}\mu(b_1')b_0b_{2,0}b_2\sumast_{\alpha\Mod{b_2}}e\left(\frac{\alpha (h_1-h_2)}{b_2}\right)\\
    &\times S(h_1+\ell,\eta_2 n\overline{b_0^2b_{1,0}b_{2,0}b_2}^2;b_1)S(h_2+\ell,\eta_2 n\overline{b_0^2b_{1,0}b_{2,0}b_2}^2;b_1)\\
    &\times \mathop{\sumast\sumast}_{\substack{\beta_1,\beta_2\Mod{b_0^2b_{1,0}b_{2,0}}\\\beta_1\equiv\beta_2\Mod{b_0b_{2,0}}}}e\left(\frac{(\beta_1 (h_1+\ell)-\beta_2 (h_2+\ell)+\eta_2(\overline{\beta_1}-\overline{\beta_2}) n)\overline{b_1b_2}}{b_0^2b_{1,0}b_{2,0}}\right).
\end{align*}

If $b_1|n$, then \begin{align*}
    S(h_1+\ell,\eta_2 n\overline{b_0^2b_{1,0}b_{2,0}b_2}^2;b_1)S(h_2+\ell,\eta_2 n\overline{b_0^2b_{1,0}b_{2,0}b_2}^2;b_1)\ll n^2.
\end{align*}
For $j=1,2$, if $b_1|(h_j+\ell)$, then \begin{align*}
    S(h_j+\ell,\eta_2 n\overline{b_0^2b_{1,0}b_{2,0}b_2}^2;b_1)=\sumast_{\beta\Mod{b_1}}e\left(\frac{\beta n}{b_1}\right)\ll nX^\varepsilon.
\end{align*}
Bounding the Kloosterman sum by the Weil bound when $b_1\not|n(h_1+\ell)(h_2+\ell)$ and evaluating the Ramanujan sum, we have \begin{align*}
    \mathcal{T}(\ell,n,h_1,h_2;r_0m_0,b')\ll& \mathop{\sum\sum\sum\sum\sum\sum}_{\substack{b_0b_{1,0}b_1=r_0m_0b_1'\\b_0b_{2,0}b_1'b_2b_3=b'\\b_{1,0}b_{2,0}|b_0^\infty\\(b_2b_3,b_0b_1')=(b_1,b_0b_2)=1)}}b_0^2b_{1,0}b_{2,0}(b_1+n^2)b_2b_3^2\delta(h_1\equiv h_2\Mod{b_3})\\
    \ll& r_0m_0b'n^2X^\varepsilon\sum_{b''|b}b''\delta(h_1\equiv h_2\Mod{b''}).
\end{align*}
\end{proof}

\printbibliography

\end{document}